\documentclass[a4paper,12pt]{amsart}

\setlength{\textwidth}{418pt}
\setlength{\oddsidemargin}{17.5pt}
\setlength{\evensidemargin}{17.5pt}

%
\usepackage{t1enc}
\def\frak{\mathfrak}
\def\lie#1{\mathfrak{#1}}

\def\Cal{\mathcal}


\def\bbC{\mathbb{C}}

\def\bbH{\mathbb{H}}

\def\bbR{\mathbb{R}}
\def\bbW{\mathbb{W}}
\def\bbX{\mathbb{X}}
\def\bbY{\mathbb{Y}}

\def\cD{\mathcal{D}}
\def\cE{\mathcal{E}}
\def\cF{\mathcal{F}}

\def\cT{\mathcal{T}}

\def\cC{\mathcal{C}}

\def\cS{\mathcal{S}}

\def\de{\delta}

\def\io{\iota}

\def\la{\lambda}
\def\rh{\rho}
\def\si{\sigma}

\def\Ga{\Gamma}
\def\De{\Delta}

\def\Si{\Sigma}
\def\Ph{\Phi}

\def\Up{\Upsilon}
\def\na{\nabla}

\newcommand{\cq}{{\Cal Q}}
\newcommand{\ce}{{\Cal E}}
\newcommand{\bg}{\mbox{\boldmath{$ g$}}}
\newcommand{\nd}{\nabla}
\newcommand{\Rho}{P}
\newcommand{\Ric}{\operatorname{Ric}}
\newcommand{\J}{J}      
\newcommand{\V}{P}          
\newcommand{\wh}{\widehat}
\newcommand{\wt}{\widetilde}
\newcommand{\ol}{\overline}

\def\form#1{\mathbf{#1}}

\newcommand{\id}{\operatorname{id}}
\newcommand{\End}{\operatorname{End}}

\newcommand{\Symb}{\operatorname{Symb}}

\newtheorem{theorem}{Theorem}[section]
\newtheorem{lemma}[theorem]{Lemma}
\newtheorem{proposition}[theorem]{Proposition}

\theoremstyle{remark}

\newtheorem*{remark*}{\rm\bf Remark}

\newtheorem{example}[theorem]{\rm\bf Example}

\usepackage{amssymb,stmaryrd}
\usepackage{amscd}

\newcommand{\nn}[1]{(\ref{#1})}


\def\sideremark#1{\ifvmode\leavevmode\fi\vadjust{\vbox to0pt{\vss
 \hbox to 0pt{\hskip\hsize\hskip1em
 \vbox{\hsize3cm\tiny\raggedright\pretolerance10000
 \noindent #1\hfill}\hss}\vbox to8pt{\vfil}\vss}}}%
                        
                                                   %

\def\idx#1{{\em #1\/}}

\author{Josef \v Silhan}
\title{Conformally invariant quantization -- towards complete classification}

\begin{document}

\begin{abstract}
Let $M$ be a smooth manifold equipped with a conformal structure,
$\cE[w]$ the space of densities with the the conformal weight $w$ and 
$\cD_{w,w+\de}$ the space of differential operators from 
$\cE[w]$ to $\cE[w+\de]$. Conformal quantization $Q$ is a 
right inverse of the principle symbol map on $\cD_{w,w+\de}$ such that $Q$
is conformally invariant and exists for all $w$.
This is known to exists for generic values of $\de$. We give explicit
formulae for $Q$ for all $\delta$ out of the set of critical weights
$\Si$. We provide a simple description of this set and   
conjecture its minimality.
\end{abstract}

\address{Max Planck Institute for Mathematics \\
Vivatsgasse 7 \\
53111 Bonn \\
Germany} 
\email{silhan@math.muni.cz}

\subjclass[2000]{Primary 53A55; Secondary 53A30, 58J70, 17B56}
\keywords{conformal differential geometry, invariant quantization,
invariant differential operators}

\thanks{The author gratefully acknowledges the support of the 
Erwin Schr\"odinger Institute (Vienna) and the Max-Planck-Institute
(Bonn).}

\maketitle

\pagestyle{myheadings}
\markboth{Josef \v Silhan}{Conformally invariant quantization -- towards 
complete classification}

\section{Introduction}

The notion of quantization originates in physics. Here we view it 
as quest for a correspondence between a space of differential 
operators and the corresponding space of symbols. More specifically,
consider the space $\cD_0$ of differential 
operators acting on smooth functions on a smooth manifold $M$ and the space of 
symbols $\cS_0$. Quantization is a map $Q_0: \cS_0 \to \cD_0$ such that
$\Symb \circ Q_0 = \id|_{\cS_0}$ where $\Symb: \cD_0 \to \cS_0$ is the 
principal symbol map. If $\Ph \in \cD_0$ of the order $k$ has the principal 
symbol 
$\si$ then $\Ph-Q_0(\si) \in \cD_0$ has the order $k-1$. Iterating this
we obtain the isomorphism of vector spaces 
$\bigoplus_{i=0}^k \cS_0^i \cong \cD_0^k$
where $\cS^i_0 = \Ga(\bigodot^i TM) \subseteq \cS_0$ and 
$\cD^k_0 \subseteq \cD_0$ is the space of operators of order at most $k$.
Here $\bigodot^k$ is the $k$th symmetric tensor product.
We shall use the notation $Q_0^\si := Q_0(\si)$.

There is no natural quantization on a $M$. On the other hand, 
e.g.\ a choice of a linear connection $\na$ on $M$ yields a prefered 
quantization in an obvious way: if $\si \in \cS^k_0$ and $f \in C^\infty(M)$,
we put $Q_0^\si(f) = \si(\na^{(k)} f)$ where $\na^{(k)} f$ is the symmetrized
$k$--fold covariant derivative. 
Therefore there is a canonical 
quantization on every pseudo-Riemannian manifold $M$. Motivated by this 
observation one can ask whether there is a natural quantization for less
rigid geometrical structures on $M$. 

In this article we study the case when the manifold $M$ is equipped
with a conformal structure. This was iniciated by Duval, Lecomte and Ovsienko 
\cite{DLOex}, see also \cite{LeOvPr} for the projective case.
The study of quantization for these (and related) structures has been very 
active in recent years, we refer to the survey \cite{Ma} and references 
therin for the state of art. 

The conformal structure on manifold $M$ is a class of pseudo--Riemannian 
metrics
$[g] = \{fg \mid f \in \bbC^\infty(M), f>0 \}$ on a manifold $M$.
The homegeneous model is the pseudosphere
$M=S^{p,q} := S^p \times S^q$, where $(p,q)$ is the signature of $g$,
the product of the standard metrics on $S^p$ and $S^q$. This is homogeneous
space for $G=SO_0(p+1,q+1)$ acting on $S^{p,q}$ by conformal motions of $[g]$
and we have the isomorphism $S^{p,q} \cong G/P$ where $P \subseteq G$ is the 
Poincare conformal group of motions fixing a point, see \cite{CSbook}
for details. Then both $\cS_0$ and $\cD_0$ are 
$G$--modules and the question of conformally invariant quantization means 
to construct $Q_0: \cS_0 \to \cD_0$
which intertwines these $G$--actions. 
If we pass from $S^{p,q}$ to $\bbR^{p,q}$ via the stereographic projection, we
replace the $G$--action (which is not defined on $\bbR^{p,q}$) by the 
infinitesimal $\frak{g}$--action. The Lie algebra $\frak{g}$
of $G$ can be realized as a Lie algebra of (polynomial) vector fields on 
$\bbR^{p,q}$ and they act by the Lie derivative as infinitesimal conformal 
symmetries. 
The same can be done for every locally conformally flat manifold and
the invariance of $Q_0$ is given by this $\frak{g}$--action. This setting
is often taken as the starting point in the study of invariant 
(or equivariant) quantization \cite{DLOex}. It is natural to consider more 
generally bundles of conformal densities $E[w]$, $w \in \bbR$ 
(instead just functions) and the 
space of differential operators $\Ga(E[w_1]) \to \Ga(E[w_2])$
denoted by $\cD_{w_1,w_2}$.
Denoting by $\cD^k_{w_1,w_2}$ the space of operators of degree $\leq k$, 
the corresponding bundle of $k$th degree symbols is then 
$S_\de^k = (\bigodot^k TM) \otimes E[\de] \cong 
\cD^k_{w_1,w_2}/\cD^{k-1}_{w_1,w_2}$ where $\de = w_2-w_1$.
Note this is the notation used in the conformally invariant calculus; 
the space of densities can be also defined as  
$\cF_\la = \Ga(\otimes^\la(\bigwedge^n T^*M))$ where 
$\bigwedge^n T^*M \to M$ is the determinant bundle, $n=\dim(M)$.
Then one has the relation $\Ga(E[-nw]) = \cF_{w}$.

Summarizing, the question in the conformally flat case 
is whether for a given $\de \in \bbR$ there
is an isomorphism of $\lie{so}_{p+1,q+1}$-modules
\begin{equation} \label{first}
Q_\de: \cS_\de \longrightarrow \cD_{w,w+\de} 
\end{equation}
for all $w \in \bbR$ where $S_\de = (\bigodot TM) \otimes E[\de]$.
That is, the corresponding bilinear differential operator
$Q_\de: \cS_\de \times \cE[w] \to \cE[w+\de]$ is conformally invariant.
It turns out the answer is positive for a generic weight $\de$. 
More precisely, it is shown in \cite{DLOex}
that if $\de \not\in \wt{\Si}$ where $\wt{\Si}$ is the set of 
\idx{critical weights} from \cite{DLOex} then 
the conformal quantization $Q_\de$ exists.
Note to get a complete answer one needs to study critical weights 
for particular irreducible components of $\cS_\de$.

Now we turn to the curved case where $M$ is a manifold with the given 
conformal class $[g]$. Then there are generically no infinitesimal 
symmetries on $(M,[g])$ and by invariance of the quantization
$Q_\de: \cS_\de \to \cD_{w,w+\de}$ we mean the corresponding
bilinear operator $Q_\de: \cS_\de \times \cE[w] \to \cE[w+\de]$ is given
in terms of a Levi--Civita connection $\na$ from the conformal class, 
its curvature $R$ and algebraic operations in such 
a way that $Q_\de$ does not depend on the choice of $\na$.
(This is equivalent to the $\frak{so}_{p+1,q+1}$-invariance
on conformally flat manifolds \cite{CSbook}.)
Using the terminology of conformal geometry,
$Q_\de$ has a \idx{curved analogue}.  Note there is generally no hope
for uniqueness of $Q_\de$ as the curvature can modify conformal operators
in various ways. 

\vspace{1ex}

Let us briefly summarize the development iniciated by \cite{DLOex} where 
the conformally flat case is considered. On one hand, there are 
several results for lower order cases \cite{DuOvHam,Dj3,MaRaNc}.
On the other hand, in the recent Kroeske's thesis \cite{KrTh}, a general 
problem of construction of conformal bilinear operators 
$V_1 \times V_2 \to W$ for given 
irreducible conformal bundles $V_1$, $V_2$ and $W$ is solved provided 
conformal weights of the bundles concerned are not critical. In fact, 
the result in \cite{KrTh} is much stronger as it provides such construction
for the wide class of \idx{parabolic geometries}. Conformal geometry is the 
most studied parabolic structure, other parabolic geometries are e.g.\ 
projective, contact projective or CR. In particular, parabolic geometries
cover all ``IFFT--cases'' \cite{IFFT}.  
For conformal structures,  the case $Q_0$ is related
to construction of symmetries of differential operators, 
see e.g.\ \cite{Ea,EaLe} for the Laplace operator.
The construction in \cite{KrTh} is very general however it is clear how to
obtain quantization from the machinery developed there. (The question of 
symbols and possible dependence on $w$ is not explicitly addressed there).

To classify the conformal quantization, the two basic quastions are the 
minimality of the critical set $\Si$ in the flat case and existence 
(and explicit construction) of $Q_\de$ for $\de \not \in \Si$
in the curved case. There are (up to our knowledge) no nonexistence 
results for critical conformal cases on $S^{p,q}$ hence the minimality is 
an issue. ($\wt{\Si}$ from \cite{DLOex}
is not minimal as observed in \cite{Dj3} for the third order quantization.)
An explicit construction for curved conformal manifolds is known only 
trace--free symbols in $\cS_\de$ \cite{RaT-free}.

Here we focus on the construction but also obtain a partial step towards
minimality of the critical set. The main result is
Theorem \ref{quant} which provides an explicit (and inductive) formula for
$Q_\de$ on all curved conformal manifolds. 
We obtain the critical set $\Si$ which is smaller than corresponding sets
in \cite{DLOex} or \cite{KrTh} and agrees with \cite{Dj3} for the 
order three. Moreover, we indicate some reasons why our set of critical weights 
$\Si$ should be minimal in Proposition \ref{critop}. We shall discuss 
minimality of this set in the follow up work \cite{SiNex} in detail. 

Let us comment upon what we mean by explicit construction. There is 
obviously no reason to ask for a formula in terms of a Levi--Civita
connection $\na$ (and its curvature) from the conformal class. These are
getting extremely complicated already for higher order \idx{linear} 
conformal operators
\cite{GoPetLap}. The conformal analogue of Riemannian $\na$--calculus is 
the \idx{tractor calculus},
see \cite{BEGo} for a discussion on its origin. 
It is closely related to the Cartan 
connection \cite{CapGotrans,luminy} and can be viewed as a linear or 
``explicit'' version of the Cartan connection. The transformation from 
tractors to formulae in terms of Levi--Civita connection 
is given by simple rules, see \cite{GoPetLap}
for a computer implementation.
In Theorem \ref{quant} we obtain simple tractor formulae for the conformal 
quantization $Q_\de$.
Then we discuss the critical set $\Si$ in details and conjecture its 
minimality, see Section \ref{scrit}.


The author would like to thank Andreas \v{C}ap for many fruitful 
discussions during this research.

\section{Conformal geometry and tractor calculus}

\subsection{Notation and background.}
We present here a brief summary, further details may
be found in \cite{CapGoamb,GoPetLap}.  Let $M$ be a smooth manifold of
dimension $n\geq 3$. Recall that a {\em conformal structure\/} of
signature $(p,q)$ on $M$ is a smooth ray subbundle $\cq\subset
S^2T^*M$ whose fibre over $x$ consists of conformally related
signature-$(p,q)$ metrics at the point $x$. Sections of $\cq$ are
metrics $g$ on $M$. So we may equivalently view the conformal
structure as the equivalence class $[g]$ of these conformally related
metrics.  The principal bundle $\pi:\cq\to M$ has structure group
$\bbR_+$, and so each representation ${\bbR}_+ \ni x\mapsto x^{-w/2}\in
{\rm End}(\bbR)$ induces a natural line bundle on $ (M,[g])$ that we
term the conformal density bundle $E[w]$. We shall write $ \ce[w]$ for
the space of sections of this bundle.  We write $\ce^a$ for the space
of sections of the tangent bundle $TM$ and $\ce_a$ for the space of
sections of $T^*M$. The indices here are abstract in the sense of
\cite{ot} and we follow the usual conventions from that source. So for
example $\ce_{ab}$ is the space of sections of $\otimes^2T^*M$.  Here
and throughout, sections, tensors, and functions are always smooth.
When no confusion is likely to arise, we will use the same notation
for a bundle and its section space.

We write $\bg$ for the {\em conformal metric}, that is the
tautological section of $S^2T^*M\otimes E[2]$ determined by the
conformal structure. This is used to identify $TM$ with
$T^*M[2]$.  For many calculations we employ abstract indices in an
obvious way.  Given a choice of metric $ g$ from $[g]$,
we write $ \nabla$ for the corresponding Levi-Civita connection. With
these conventions the Laplacian $ \Delta$ is given by
$\Delta=\bg^{ab}\nd_a\nd_b= \nd^b\nd_b\,$. Here we are raising indices
and contracting using the (inverse) conformal metric. Indices will be
raised and lowered in this way without further comment.  Note $E[w]$
is trivialized by a choice of metric $g$ from the conformal class, and
we also write $\nd$ for the connection corresponding to this
trivialization.  The coupled $ \nd_a$
preserves the conformal metric.

The curvature $R_{ab}{}^c{}_d$ of the Levi-Civita connection (the
Riemannian curvature) is given by $ [\nd_a,\nd_b]v^c=R_{ab}{}^c{}_dv^d
$ ($[\cdot,\cdot]$ indicates the commutator bracket).  This can be
decomposed into the totally trace-free Weyl curvature $C_{abcd}$ and a
remaining part described by the symmetric {\em Schouten tensor}
$\Rho_{ab}$, according to
\begin{equation}\label{csplit}
R_{abcd}=C_{abcd}+2\bg_{c[a}\Rho_{b]d}+2\bg_{d[b}\Rho_{a]c}, 
\end{equation}
 where
$[\cdots]$ indicates antisymmetrisation over the enclosed indices.
The Schouten tensor is a trace modification of the Ricci tensor
$\Ric_{ab}=R_{ca}{}^c{}_b$ 
and vice versa: $\Ric_{ab}=(n-2)\Rho_{ab}+\J\bg_{ab}$,
where we write $ \J$ for the trace $ \V_a{}^{a}$ of $ \V$.  The {\em
Cotton tensor} is defined by 
$
A_{abc}:=2\nabla_{[b}\Rho_{c]a} .
$
Via the Bianchi identity this is related to the divergence of the Weyl 
tensor as follows:
\begin{equation}\label{bi1} (n-3)A_{abc}=\nabla^d
C_{dabc} . 
\end{equation} 
Under a {\em conformal transformation} we replace a choice of metric 
$g$ by the metric $\hat{g}=e^{2\Up} g$, where $\Up$ 
 is a smooth
function. We recall that, in particular, the Weyl curvature is
conformally invariant $\widehat{C}_{abcd}=C_{abcd}$. 
With $\Up_a: = \na_a \Up$, the Schouten tensor transforms according to
\begin{equation}\label{Rhotrans}
\textstyle \widehat{\V}_{ab}=\V_{ab}-\nd_a \Up_b +\Up_a\Up_b
-\frac{1}{2} \Up^c\Up_c \bg_{ab} .
\end{equation}

Explicit formulae for the corresponding transformation of
the Levi-Civita connection and its curvatures are given in e.g.\ 
\cite{BEGo,GoPetLap}. From these, one can easily compute the 
transformation for a general valence (i.e.\ rank) $s$ section 
$f_{bc \cdots d} \in \cE_{bc \cdots d}[w]$ using the Leibniz rule:
\begin{equation} \label{grad_trans_gen}
\begin{split}
  \hat{\na}_{\bar{a}} f_{bc \cdots d}
  =&  \na_{\bar{a}} f_{bc \cdots d} + (w-s) \Up_{\bar{a}} f_{bc \cdots d}
     -\Up_{b} f_{\bar{a}c \cdots d} \cdots
     -\Up_{d} f_{bc \cdots \bar{a}} \\
   & +\Up^p f_{p c \cdots d} \bg_{b\bar{a}} \cdots
     +\Up^p f_{bc \cdots p} \bg_{d\bar{a}}.
\end{split}
\end{equation}

We next define the standard tractor bundle over $(M,[g])$.
It is a vector bundle of rank $n+2$ defined, for each $g\in[g]$,
by  $[\ce^A]_g=\ce[1]\oplus\ce_a[1]\oplus\ce[-1]$. 
If $\wh g=e^{2\Up}g$, we identify  
 $(\alpha,\mu_a,\tau)\in[\ce^A]_g$ with
$(\wh\alpha,\wh\mu_a,\wh\tau)\in[\ce^A]_{\wh g}$
by the transformation
\begin{equation}\label{transf-tractor}
 \begin{pmatrix}
 \wh\alpha\\ \wh\mu_a\\ \wh\tau
 \end{pmatrix}=
 \begin{pmatrix}
 1 & 0& 0\\
 \Up_a&\delta_a{}^b&0\\
- \tfrac{1}{2}\Up_c\Up^c &-\Up^b& 1
 \end{pmatrix} 
 \begin{pmatrix}
 \alpha\\ \mu_b\\ \tau
 \end{pmatrix} .
\end{equation}
It is straightforward to verify that these identifications are
consistent upon changing to a third metric from the conformal class,
and so taking the quotient by this equivalence relation defines the
{\em standard tractor bundle} $\ce^A$ over the conformal manifold.
(Alternatively the standard tractor bundle may be constructed as a
canonical quotient of a certain 2-jet bundle or as an associated
bundle to the normal conformal Cartan bundle \cite{luminy}.) On a
conformal structure of signature $(p,q)$, the bundle $\ce^A$ admits an
invariant metric $ h_{AB}$ of signature $(p+1,q+1)$ and an invariant
connection, which we shall also denote by $\nabla_a$, preserving
$h_{AB}$. Up up to isomorphism this the unique {\em normal conformal
tractor connection} \cite{CapGotrans} and it induces a normal
connection on $\bigotimes \ce^A$ that we will also denoted by $\na_a$
and term the (normal) tractor connection.  In a conformal scale $g$,
the metric $h_{AB}$ and $\na_a$ on $\ce^A$ are given by
\begin{equation}\label{basictrf}
 h_{AB}=\begin{pmatrix}
 0 & 0& 1\\
 0&\bg_{ab}&0\\
1 & 0 & 0
 \end{pmatrix}
\text{ and }
\nabla_a\begin{pmatrix}
 \alpha\\ \mu_b\\ \tau
 \end{pmatrix}
 =
\begin{pmatrix}
 \nabla_a \alpha-\mu_a \\
 \nabla_a \mu_b+ \bg_{ab} \tau +\Rho_{ab}\alpha \\
 \nabla_a \tau - \Rho_{ab}\mu^b  \end{pmatrix}. 
\end{equation}
It is readily verified that both of these are conformally well-defined,
i.e., independent of the choice of a metric $g\in [g]$.  Note that
$h_{AB}$ defines a section of $\ce_{AB}=\ce_A\otimes\ce_B$, where
$\ce_A$ is the dual bundle of $\ce^A$. Hence we may use $h_{AB}$ and
its inverse $h^{AB}$ to raise or lower indices of $\ce_A$, $\ce^A$ and
their tensor products.

In computations, it is often useful to introduce 
the `projectors' from $\ce^A$ to
the components $\ce[1]$, $\ce_a[1]$ and $\ce[-1]$ which are determined
by a choice of scale.
They are respectively denoted by $X_A\in\ce_A[1]$, 
$Z_{Aa}\in\ce_{Aa}[1]$ and $Y_A\in\ce_A[-1]$, where
 $\ce_{Aa}[w]=\ce_A\otimes\ce_a\otimes\ce[w]$, etc.
Using the metrics $h_{AB}$ and $\bg_{ab}$ to raise indices,
we define $X^A, Z^{Aa}, Y^A$. Then we
 see that 
\begin{equation} \label{trm}
  Y_AX^A=1,\ \ Z_{Ab}Z^A{}_c=\bg_{bc} ,
\end{equation}
and all other quadratic combinations that contract the tractor
index vanish. 
In \eqref{transf-tractor} note that  
$\wh{\alpha}=\alpha$ and hence $X^A$ is conformally invariant. 

The curvature $ \Omega$ of the tractor connection
is defined on $\ce^C$ by
$
[\nd_a,\nd_b] V^C= \Omega_{ab}{}^C{}_EV^E ~.
$
 Using
\eqref{basictrf} and the formulae for the Riemannian curvature yields
\begin{equation}\label{tractcurv}
\Omega_{abCE}= Z_C{}^cZ_E{}^e C_{abce}-2X_{[C}Z_{E]}{}^e A_{eab}
\end{equation}

Given a choice of $g\in [g]$,
the {\em tractor-$D$ operator}
$
D_A\colon\ce_{B \cdots E}[w]\to\ce_{AB\cdots E}[w-1]
$
is defined by 
\begin{equation}\label{Dform}
D_A V:=(n+2w-2)w Y_A V+ (n+2w-2)Z_{Aa}\nabla^a V -X_A\Box V, 
\end{equation} 
 where $\Box V :=\Delta V+w \J V$.  This is
 conformally invariant, as can be checked directly using the formulae
 above (or alternatively there are conformally invariant constructions
 of $D$, see e.g.\ \cite{Gosrni}).

The operator $D_A$ is strongly invariant. That is, it is invariant as 
an operator 
$$
D_A: \cE_{B \cdots E}[w]\to\ce_{AB\cdots E}[w-1]
$$
where now we interpret $\na$ in \nn{Dform} as the couple 
Levi--Civita--tractor connection. Note the 
strong invariance is a property of a \idx{formulae},
see \cite[p.21]{GoQsrni} for a more detailed discussion and 
\cite[(2)]{EaSrni95} for an illustrative example. 
We shall say an operator is strongly invariant if it is clear which
formula we mean. Note composition of two strongly invariant operators
is strongly invariant.

\subsection{Tractor connection and standard tractors}
Using the standard tractors $X_B$, $Z_B^b$ and $Y_B$, the tractor connections
takes the form
\begin{eqnarray} \label{XYZ}
\begin{split}  
  & \na_a Y_B \si = Y_B \na_a \si + Z_B^b P_{ab} \si, &&\si \in \cE[w] \\
  & \na_a Z_B^b \mu_b = -Y_B \mu_a + Z_B^b \na_a \mu_b 
    - X_B P_a{}^b \mu_b, &&\mu_b \in \cE_b[w] \\
  & \na_a X_B \rh = Z_B^b\bg_{ab} \rh + X_B \na_a \rh, &&\rh \in \cE[w]
\end{split}
\end{eqnarray}
which follows from \nn{basictrf} (or see e.g.\ \cite{GoPetLap}).
More accurately, $\na$ denotes the coupled tractor--Levi-Civita connection
in expressions like in the previous display.

We shall need, more generally, to know how the composition of several 
applications of the tractor connection acts on standard tractors. In fact, 
we shall need this only on $\bbR^{p,q}$. It follows from 
\nn{XYZ} (and can be verified easily by induction wrt.\ $k \geq 1$) that
\begin{align*}
  \na_{(a_1} \ldots \na_{a_k)} Y_B \si =& 
   \, Y_B \na_{(a_1} \ldots \na_{a_k)} \si + ct, \\
  \na_{(a_1} \ldots \na_{a_k)} Z_B^b \mu_b =& 
   - kY_B \de^b_{(a_1} \na_{a_2}^{} \ldots \na_{a_k)}^{} \mu_b^{}
   + Z_B^b \na_{(a_1} \ldots \na_{a_k)} \mu_b + ct, \\ 
  \na_{(a_1} \ldots \na_{a_k)} X_B \rh =& 
   - \frac{1}{2}k(k-1) Y_B \bg_{(a_1a_2} \na_{a_3} \ldots \na_{a_k)} \rh
   + kZ_B^b \bg_{b(a_1} \na_{a_2} \ldots \na_{a_k)} \rh  \\
  &+ X_B \na_{(a_1} \ldots \na_{a_k)} \rh + ct
\end{align*}
where $\si \in \cE[w]$, $\mu_b \in \cE_b[w]$, $\rh \in \cE[w]$ and 
``ct'' denotes terms which involve curvature and at most $k-2$ derivatives. 
(That is, ``ct'' vanishes on $\bbR^{p,q}$.) Here and below,
$(\ldots)$ denotes symmetrization of the enclosed indices and the notation
$(\ldots)_0$ will denote the projection to the symmetric trace-free part.
In fact, the previous display holds also for $k=0$ if we consider
expressions with $k$ free indices $a_1 \cdots a_k$ simply being absent
for $k=0$. Henceforth we shall use this convention. 
It follows from the previous display (or can be verified by induction 
directly) that for $k \geq 0$ we obtain
\begin{eqnarray} \label{nacomp}
\begin{split} 
  &&\na_{(a_1} \ldots \na_{a_k)_0} Y_B \si =& 
   \,Y_B \na_{(a_1} \ldots \na_{a_k)_0} \si + ct,  \\
  &&\qquad \na_{(a_1} \ldots \na_{a_k)_0} Z_B^b \mu_b =& 
   - kY_B \de^b_{(a_1} \na_{a_2} \ldots \na_{a_k)_0} \mu_b 
   + Z_B^b \na_{(a_1} \ldots \na_{a_k)_0} \mu_b + ct, \\
  &&\na_{(a_1} \ldots \na_{a_k)_0} X_B \rh =& 
   kZ_B^b \bg_{b(a_1} \na_{a_2} \ldots \na_{a_k)_0} \rh 
   + X_B \na_{(a_1} \ldots \na_{a_k)_0} \rh + ct
\end{split}
\end{eqnarray}
and for $\ell \geq 0$ we have
\begin{eqnarray} \label{Decomp}
\begin{split} 
  && \De^\ell Y_B \si =& \, Y_B \De^\ell \si + ct,  \\
  && \De^\ell Z_B^b \mu_b =& 
   - 2\ell Y_B \na^b \De^{\ell-1} \mu_b 
   + Z_B^b \De^\ell \mu_b + ct, \\
  && \De^\ell X_B \rh =& 
   - \ell(n+2\ell-2) Y_B \De^{\ell-1} \rh 
   + 2\ell Z_B^b \na_b \De^{\ell-1} \rh 
   + X_B \De^\ell \rh + ct
\end{split}
\end{eqnarray}
where $\si \in \cE[w]$, $\mu_b \in \cE_b[w]$, $\rh \in \cE[w]$.

\section{Tractor construction of conformal quantization and critical weights}

We assume 
$\si^{a_1 \ldots a_k} \in \cE^{(a_1 \ldots a_k)}[\de] =: \cS_{\de,k}$
and $f \in \cE[w]$. Our aim is to construct a quantization i.e.\
a differential operator $Q_\de^\si: \cE[w] \to \cE[w+\de]$
with the leading term
$\si^{a_1 \ldots a_k} \na_{a_1} \cdots \na_{a_k}$.
The bundle of symbols $\cE^{(a_1 \ldots a_k)}[\de]$ decomposes into 
irreducibles as
$$ 
\cE^{(a_1 \ldots a_k)}[\de] = \bigoplus_{i=0}^{\lfloor \frac{k}{2} \rfloor}
\cE^{(a_1 \ldots a_{k-2i})_0}[\de+2i]
$$
where $\lfloor a \rfloor$ denotes the lower integer part of $a \in \bbR$.
We can assume $\si$ is irreducible (as $Q_\de^\si$ is linear in $\si$) so
\begin{gather*}
\si^{a_1 \ldots a_k} = 
\si'{}^{(a_1 \ldots a_{k'}} \bg^{a_{k'+1}a_{k'+2}} \ldots 
\bg^{a_{k'+2\ell-1}a_{k'+2\ell})}, \ \ k'+2\ell=k
\quad \text{where} \\
(\si'){}^{a_1 \ldots a_{k'}} \in \cE^{(a_1 \cdots a_{k'})_0}[\de'], \ \
\de' = \de+2\ell
\end{gather*}
since $\bg^{ab} \in \cE^{ab}[-2]$. 

Henceforth we consider the irreducible symbol $\si'$ as in the 
previous display. Our aim is to construct a differential operator
\begin{eqnarray} \label{Q}
\begin{split}
  &Q_{k',\ell}^{\si'}: \cE[w] \to \cE[w+\de'-2l] \\
  &Q_{k',\ell}^{\si'} (f) = (\si')^{a_1 \ldots a_{k'}} 
   \na_{(a_1} \ldots \na_{a_{k'})_0} \De^\ell f + lot
\end{split}
\end{eqnarray}
which is conformally invariant as the bilinear operator
$Q_{k',\ell}: \cE^{(a_1 \cdots a_{k'})_0}[\de'] \times \cE[w] 
\to \cE[w+\de'-2l]$.
Here ``lot'' denotes lower order terms and we have suppressed
the parameter $\de'$ in the notation for $Q$. The reason is that 
we will define
the operator $Q_{k',\ell}^{\si'}: \cE[w] \to \cE[w+\de'-2l]$ by a universal 
tractor formula for all $\de' \in \bbR$. Then we shall discuss
when (i.e.\ for which $\de'$) $Q_{k',\ell}^{\si'}$ fails to have the 
required leading term.

The construction of $Q_{k',\ell}$ is divided into two steps -- the cases
$\ell=0$ and $\ell>0$.

\subsection{The quantization $Q_{k',0}$}
This case is more or less known. Here we shall formulate it as follows.

\begin{theorem} \label{quant0}
Let $(\si'){}^{a_1 \ldots a_{k'}} \in \cE^{(a_1 \ldots a_{k'})_0}[\de']$.
There is an explicit formula for the quantization 
$Q_{k',0}^{\si'}: \cE[w] \to \cE[w+\de']$ with the leading term
$(\si')^{a_1 \ldots a_{k'}}\na_{a_1} \ldots \na_{a_{k'}}$
for every weight $\de' \in \bbR$ satisfying
\begin{equation} \label{crit0}
  \de' \not\in \Si_{k',0} \quad \mbox{where} \quad 
  \Si_{k',0} =
  \begin{cases} \{ -(n+k'+i-2) \mid i = 1,\ldots,k' \} & k' \geq 1 \\
                \emptyset & k'=0.  
  \end{cases} 
\end{equation} 

Moreover, $Q_{k',0}$ is strongly conformally invariant in the following sense: 
if we replace $f \in \cE[w]$ by $f \in \cT \otimes \cE[w]$ for any tractor 
bundle $\cT$ and, in the formula for $Q_{k',0}$, we replace  
the Levi--Civita connection acting on $f$ by the coupled 
Levi--Civita--tractor connection then $Q_{k',0}$ is a conformally 
invariant bilinear operator 
$\cE^{(a_1 \ldots a_{k'})_0}[\de'] \times \cT \otimes \cE[w] \to
\cT \otimes \cE[w+\de']$.
\end{theorem}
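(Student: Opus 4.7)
The plan is to construct $Q_{k',0}^{\sigma'}$ as a universal tractor formula, exploiting the strong invariance of the tractor $D$-operator \eqref{Dform}. For $k'=0$ we take $Q_{0,0}^{\sigma'}(f) := \sigma' f$, which is trivially invariant for every $\delta'$. For $k' \geq 1$, the strategy is to first lift the symmetric trace-free symbol $\sigma'$ to a symmetric, tractor-trace-free object $\tilde{\sigma}' \in \cE^{(A_1 \ldots A_{k'})_0}[\delta'+k']$ via a splitting operator, and then to set
\[
  Q_{k',0}^{\sigma'}(f) \;:=\; \tilde{\sigma}'{}^{A_1 \ldots A_{k'}} D_{A_1} D_{A_2} \cdots D_{A_{k'}} f,
\]
possibly divided by a $w$-polynomial coming from the iterated $D$-operators. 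Conformal invariance is then automatic because each building block is invariant or strongly invariant.

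The splitting operator $\sigma' \mapsto \tilde{\sigma}'$ is built by starting from the non-invariant middle-slot injection $Z^{A_1}{}_{a_1} \cdots Z^{A_{k'}}{}_{a_{k'}} \sigma'^{a_1 \ldots a_{k'}}$ and adding successive $X$-correction terms involving iterated divergences of $\sigma'$ (and, in the curved case, lower-order curvature contributions). The coefficients of these corrections are uniquely pinned down by the requirement that the full expression transforms covariantly under the scale change \eqref{transf-tractor}; a direct computation using \eqref{grad_trans_gen} and \eqref{basictrf} shows that these coefficients are rational in $\delta'$ with denominators whose zero set is exactly $\Sigma_{k',0}$. Hence the splitting operator, and with it $Q_{k',0}^{\sigma'}$, is well-defined precisely when $\delta' \notin \Sigma_{k',0}$.

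The leading-term claim is verified directly using \eqref{nacomp} and \eqref{Dform}: each $D_A$ acting on a weight-$w'$ object contributes $(n+2w'-2) Z_A{}^a \nabla_a$ at top order, so after $k'$ applications $D_{A_1} \cdots D_{A_{k'}} f$ has highest-derivative part proportional (by a nonzero $w$-polynomial) to $Z_{A_1}{}^{a_1} \cdots Z_{A_{k'}}{}^{a_{k'}} \nabla_{a_1} \cdots \nabla_{a_{k'}} f$; contracting with the middle slot of $\tilde{\sigma}'$, which by construction reproduces $\sigma'$, gives $\sigma'{}^{a_1 \ldots a_{k'}} \nabla_{a_1} \cdots \nabla_{a_{k'}} f$ after an appropriate rescaling (trace parts vanish because $\sigma'$ is trace-free). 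Strong invariance in the theorem's sense is an immediate consequence: $D_A$, $\bg$, $X$, $Z$, and the Levi-Civita derivatives of $\sigma'$ appearing in the splitting operator all retain their meaning and invariance when $f$ is replaced by a $\cT$-valued section and $\nabla$ by the coupled tractor--Levi-Civita connection. The main obstacle in carrying this out is the bookkeeping of the splitting-operator coefficients: verifying that no spurious critical weights appear and that the critical set is exactly the $k'$ listed values in $\Sigma_{k',0}$.
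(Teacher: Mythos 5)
Your overall architecture---split the symbol into a tractor object and contract against an invariant differential operator applied to $f$---is the right general shape, and your identification of where the critical set $\Si_{k',0}$ comes from (the denominators, equivalently the top-slot coefficient, of the splitting of $\si'$) matches the paper: there the splitting of $\si'$ carries the factor $\tilde{p}(\de')=\prod_{i=1}^{k'}(\de'+n+k'+i-2)$, whose zero set is exactly $\Si_{k',0}$. The strong-invariance argument is also fine in spirit. But the specific operator you apply to $f$, namely $D_{A_1}\cdots D_{A_{k'}}f$, does not prove the theorem, because the theorem demands the correct leading symbol for \emph{every} $w\in\bbR$, and the iterated tractor-$D$ composition degenerates at $k'$ special values of $w$. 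Concretely, the $j$-th (from the inside) application of $D$ acts on weight $w-j+1$ and its $Z\nabla$ slot carries the factor $n+2(w-j+1)-2$; when this vanishes that $D$ collapses to $-X\Box$, the $X$ annihilates the corresponding slot of $\tilde{\si}'$ (which has no $Y$ components), and the top-order term is destroyed while lower-order terms survive. Your hedge ``possibly divided by a $w$-polynomial'' cannot repair this: the full operator is \emph{not} divisible by $\prod_{j}(n+2w-2j-2)$ (only its leading part is), so dividing produces an operator that is singular at precisely those $w$. The paper itself points at this failure in its second Example: ``Naively, we can use $D_AD_Bf$ but this would kill the leading term $(\si')^p\na_p\De$ for $w=-\tfrac{n}{2}+2$,'' and the cure there is to replace $D_AD_Bf$ by a different invariant splitting $\wt{T}_{AB}f$ valued in a suitable quotient bundle, whose bottom ($Z\cdots Z$) slot has coefficient $1$ independently of $w$.

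That replacement is exactly the idea your proposal is missing, and it is where the paper's proof does real work: both $f$ and $\si'$ are injected into (a subbundle, resp.\ a quotient, of) $\bigodot^{k'}$ of the \emph{adjoint} tractor bundle by the maps $f\mapsto\bbW_{\form{A}_1}\cdots\bbW_{\form{A}_{k'}}f$ and $\si'\mapsto\bbY_{\form{A}_1}{}^{a_1}\cdots\bbY_{\form{A}_{k'}}{}^{a_{k'}}\si'_{a_1\ldots a_{k'}}$, and each is corrected to an invariant section by applying a degree-$k'$ polynomial $P(\cC)$ in the curved Casimir. The $f$-side invariant extension has its bottom slot equal to $\bbX\cdots\bbX\,\na_{(a_1}\cdots\na_{a_{k'})_0}f$ with coefficient normalized to $1$ for all $w$, so no spurious critical weights in $w$ appear; the only obstruction sits in the $\si'$-side extension as the scalar $\tilde{p}(\de')$. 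If you want to keep your standard-tractor-index formulation, you must prove the existence (for all $w$) of an invariant operator $\cE[w]\to$ (quotient of $\cE_{(A_1\ldots A_{k'})}[w-k']$) with unit bottom slot---this is a genuine lemma, not a normalization of $D_{A_1}\cdots D_{A_{k'}}$---and that is the gap you would need to fill.
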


\noindent
{\bf Remark.}
The conformal quantization for the case $Q_{k',0}$ was constructed
recently in \cite{RaT-free} 
but the strong invariance of this result is unclear.
To clarify this point and to keep our presentation
self--content, we present the complete proof here.

\begin{proof}
We shall use certain splitting operators from $\cE[w]$ and 
$\cE^{(a_1 \ldots a_{k'})_0}[\de']$ into symmetric tensor products of 
the adjoint tractor bundle $\cE_{[A^1A^2]}$ 
and their subquotients. To simplify the notation, we shall introduce
adjoint tractor indices $\form{A} := [A^1A^2]$. 
These are just abstract indices 
of the adjoint tractor bundle. We shall use the notation
$f_\form{(AB)} = \frac{1}{2}( f_\form{AB} + f_\form{BA})$,  
$f_\form{AB} \in \cE_\form{AB}$ for the symmetrization, the symmetric
tensor products of the adjoint tractor bundle will be denoted by
$\cE_\form{(A_1 \ldots A_k)}$. Let us note this notation means symmetrization
over adjoint indices (not \idx{not} over standard tractor indices), i.e.\
$f_\form{(AB)_0} \not= 0$. The completely trace free component
with respect to $h^{AB}$ will be denoted by $\cE_\form{(A_1 \ldots A_k)_0}$. 
Note the latter bundles are generally not irreducible tractor bundles.

The skew symmetrization with the tractor $X_{A_i^0}$ defines bundle maps
$\cE_\form{(A_1 \ldots A_k)_0} \to 
\cE_\form{A_1 \ldots [A_i^0A_i^{}] \ldots A_k}$. The joint kernel of
all these maps for $i=1,\ldots,k$ will be denoted by 
$\bar{\cE}_\form{(A_1 \ldots A_k)_0}$. Using the complement
$\bar{\cE}_\form{(A_1 \ldots A_k)_0}^\perp \subseteq 
\cE_\form{(A_1 \ldots A_k)_0}$ (via the tractor metric $h$), 
we obtain the quotient bundle 
$\tilde{\cE}_\form{(A_1 \ldots A_k)_0} := 
\cE_\form{(A_1 \ldots A_k)_0}/\bar{\cE}_\form{(A_1 \ldots A_k)_0}^\perp$.
One can easily see that choosing a metric from the conformal class,
sections of the these have the form
\begin{align*}
&\bar{F}_\form{(A_1 \ldots A_k)_0}  = 
 \sum_{i=0}^k \bbX_\form{A_1}^{\ a_1} \ldots \bbX_\form{A_i}^{\ a_i} 
 \bbW_\form{A_{i+1}}^{} \ldots \bbW_\form{A_k}^{} 
 \bar{f}^i_{(a_1 \ldots a_i)_0}
 \ \ \mbox{for} \ \ 
 \bar{F}_\form{(A_1 \ldots A_k)_0} \in \bar{\cE}_\form{(A_1 \ldots A_k)_0},
 \\
&\tilde{F}_\form{(A_1 \ldots A_k)_0}  = 
 \sum_{i=0}^k \bbY_\form{A_1}^{\ a_1} \ldots \bbY_\form{A_i}^{\ a_i} 
 \bbW_\form{A_{i+1}}^{} \ldots \bbW_\form{A_k}^{} 
 \tilde{f}^i_{(a_1 \ldots a_i)_0}
 \ \ \mbox{for} \ \ 
 \tilde{F}_\form{(A_1 \ldots A_k)_0} \in \tilde{\cE}_\form{(A_1 \ldots A_k)_0}
\end{align*}
for some sections $\bar{f}^i_{(a_1 \ldots a_i)_0}$ and 
$\tilde{f}^i_{(a_1 \ldots a_i)_0}$. Note $i$ is \idx{not} an abstract 
index here.
This describes the composition series 
for $\bar{\cE}_\form{(A_1 \ldots A_k)_0}$ and 
$\tilde{\cE}_\form{(A_1 \ldots A_k)_0}$. (In particular, choosing
a metric in the conformal class, both these bundles decompose to exactly 
$k+1$ irreducible components, e.g.\ 
$\bar{\cE}_\form{(A_1 \ldots A_k)_0} = \cE \oplus \cE_{a_1} \oplus \ldots
\oplus \cE_{(a_1 \ldots a_k)_0}$.)
Also note the latter bundle is the 
dual of the former one. Finally, taking the tensor product with
density bundles, we obtain
$\bar{\cE}_\form{(A_1 \ldots A_k)_0}[w]$ and 
$\tilde{\cE}_\form{(A_1 \ldots A_k)_0}[w]$ for any $w \in \bbR$.

Assume $k' \geq 1$. We shall start with
$f \in \cE[w]$. For an arbitrary chosen metric from the conformal
class, we consider the inclusion 
$$ 
\bar{\io}: \cE[w] \hookrightarrow 
\bar{\cE}_\form{(A_{{}_1} \ldots A_{k'})_0}[w], \quad 
f \stackrel{\bar{\io}}\mapsto \bbW_\form{A_{{}_1}} \cdots \bbW_\form{A_{k'}}f. 
$$
Now $\bar{\io}(f)$ can be extended to a conformally invariant section
$\bar{F}_\form{A_1 \ldots A_{k'}} \in
\bar{\cE}_\form{(A_1 \ldots A_{k'})_0}[w]$ as follows: we put
$\bar{F}_\form{A_1 \ldots A_{k'}} := 
\bar{P}(\cC) (\bar{\io}(f)_{A_1 \ldots A_{k'}})$ 
where the operator $\bar{P}(\cC)$ is a suitable polynomial in  
the \idx{curved Casimir}
$\cC: \bar{\cE}_\form{(A_1 \ldots A_{k'})_0}[w] \to
\bar{\cE}_\form{(A_1 \ldots A_{k'})_0}[w]$ \cite{CScC}. It follows
from the composition series for $\bar{\cE}_\form{(A_1 \ldots A_{k'})_0}[w]$
that the degree of the polynomial $\bar{P}$
is $k'$. Let us compute the highest order term of 
$\bar{P}(\cC) \bigl( \bbW_\form{A_1} \cdots \bbW_\form{A_{k'}}f \bigr)$.
For this it is sufficient to work on $\bbR^n$ with the standard metric.
Then if $P$ is a polynomial of degree $r$, $0 \leq r \leq k'$ then
there is a (degree $r$) polynomial $p$ such that
$P(\cC) \bigl( \bbW_\form{A_1} \cdots \bbW_\form{A_{k'}}f \bigr) =
\bbW_\form{A_1} \cdots \bbW_\form{A_{k'}} p(w) f + \ldots + 
\bbX_\form{(A_1}^{\ a_1} \ldots \bbX_\form{A_r}^{\ a_r}
\bbW_\form{A_{r+1}}^{} \cdots \bbW_\form{A_{k'})}^{} 
\na_{(a_1} \cdots \na_{a_r)_0} f$ up to a (nonzero) scalar multiple.
This can be easily verified by the induction. Putting $r:=k'$,
there is a $k'$--order polynomial $\bar{p}(w)$ such that 
\begin{equation} \label{split:f}
\bar{F}_\form{A_1 \ldots A_{k'}} = 
\bbW_\form{A_1} \cdots \bbW_\form{A_{k'}} \bar{p}(w) f + \ldots + 
\bbX_\form{(A_1}^{\ a_1} \ldots \bbX_\form{A_{k'})}^{\ a_{k'}}
\na_{(a_1} \cdots \na_{a_{k'})_0} f
\end{equation}
up to a nonzero scalar multiple.

The splitting for 
$(\si'){}^{a_1 \ldots a_{k'}} \in \cE^{(a_1 \ldots a_{k'})_0}[\de']$
is analogous. We shall start with the inclusion
$$ 
\tilde{\io}: \cE^{(a_1 \ldots a_{k'})_0}[\de'] \hookrightarrow 
\bar{\cE}_\form{(A_1 \ldots A_{k'})_0}[\de'],
\quad 
(\si'){}^{a_1 \ldots a_{k'}} \stackrel{\tilde{\io}}\mapsto 
\bbY_\form{A_1}^{\ a_1} \cdots \bbY_\form{A_{k'}}^{\ a_{k'}}
(\si'){}_{a_1 \ldots a_{k'}}
$$
for a chosen metric in the conformal class. 
Then we apply a suitable polynomial operator 
in the curved Casimir to obtain a conformally invariant extension
$\tilde{F}_\form{A_1 \ldots A_{k'}} := 
\tilde{P}(\cC) (\tilde{\io}(\si')_{A_1 \ldots A_k}) \in
\tilde{\cE}_\form{(A_1 \ldots A_k)_0}[\de']$.
A similar reasoning as above shows that $\tilde{P}$ has order $k'$ and
\begin{equation} \label{split:si}
\tilde{F}_\form{A_1 \ldots A_{k'}} = 
\bbY_{\form{A_1}a_1} \cdots \bbY_{\form{A_{k'}}a_{k'}} \tilde{p}(\de') 
(\si'){}^{a_1 \ldots a_{k'}} + \ldots + 
\bbW_\form{(A_1}\ldots \bbW_\form{A_{k'})}
\na_{(a_1} \cdots \na_{a_{k'})_0} (\si'){}^{a_1 \ldots a_{k'}}
\end{equation}
on $\bbR^n$ for a polynomial $\tilde{p}$ of the order $k'$. 
In this case we need to know
$\tilde{p}(\de')$ explicitly; following \cite{CScC} we computes
$$
\tilde{p}(\de') = \prod_{i=1}^{k'} (\de'+n+k'+i-2).
$$
In fact, analogues of this splitting are well--known, see e.g.\
\cite[6.2.3]{KrTh} or \cite[2.1.4]{SiTh}.

In the last step we use the duality between 
$\bar{\cE}_\form{(A_1 \ldots A_k)_0}$ and
$\tilde{\cE}_\form{(A_1 \ldots A_k)_0}$. From this it follows
that that 
$Q_{k',0}^{\si'} (f) := 
\tilde{F}^\form{A_1 \ldots A_{k'}} \bar{F}_\form{A_1 \ldots A_{k'}}$
is a conformally invariant bilinear operator. Considering
$Q_{k',0}^{\si'}$ as a linear operator $\cE[w] \to \cE[w+\de']$, 
it follows from \nn{split:f} and \nn{split:si} that
$$
\tilde{F}^\form{A_1 \ldots A_{k'}} \bar{F}_\form{A_1 \ldots A_{k'}}
= \tilde{p}(\de') (\si'){}^{a_1 \ldots a_{k'}} \na_{(a_1} \cdots \na_{a_{k'})_0} f
+ lot
$$
where ``$lot$'' denotes the lower order terms.

It remains to verify the strong invariance of $Q_{k',0}^{\si'}(f)$.
But this follows from the fact that the curved Casimir is a strongly
invariant linear differential operator.
\end{proof}

\begin{remark*}
The formula for the curved Casimir operator can be easily given explicitly
via tractors. First we define put $\bbH_{\form{AB}} := h_{A^1B^1}h_{A^2B^2}$
where we skew over $[A^1A^2]=\form{A}$ (hence also over $[B^1B^2]=\form{B}$).
Since $\cE_\form{B}$ is the adjoint tractor bundle, there is an inclusion
$\cE_\form{B} \hookrightarrow \End(\cT)$ for any tractor bundle $\cT$.
This yields also  $\cE_{\form{AB}} \hookrightarrow \cE_\form{A} \otimes
\End(\cT)$, the image of $\bbH_{\form{AB}}$ under this inclusion
 will be denoted by 
$\bbH_\form{A} \in \cE_\form{A} \otimes \End(\cT)$.
If $F \in \cT$, the application of this endomorphism will be denoted by
$\bbH_\form{A} \sharp F \in \cE_\form{A} \otimes \cT$. Explicitly,
$\bbH_\form{A} \sharp F_C = \bbH_{\form{A}C}{}^P F_P$ for $\cT = \cE_C$ and
the general case $\cT \subseteq (\bigotimes \cE_C) \otimes \cE[w]$ 
is given by the Leibnitz rule. (We put $\bbH_\form{A} \sharp$ to be trivial 
on $\cE[w]$.)

If $\cT$ is a tractor bundle then the differential operator
$$
\cD_\form{A}: \cT \otimes \cE[w] \to \cT \otimes \cE_\form{A}[w],
\quad
\cD_\form{A} := w\bbW_\form{A} + \bbX_{\form{A}}^{\,a} \na_a 
   + \bbH_\form{A} \sharp
$$
is the (conformally invariant) \idx{fundamental derivative} 
\cite{CapGotrans} up to a nonzero scalar multiple. 
The curved Casimir $\cC$ is defined as
$\cC := \cD^\form{A} \cD_\form{A}: \cT \otimes \cE[w] \to \cT \otimes \cE[w]$.
The explicit formula for $\cC$ in terms of a chosen Levi--Civita connection
from the conformal class can be easily obtained from the previous
display.
\end{remark*}

\subsection{The general case $Q_{k',\ell}$}
Recall $k',\ell \geq 0$,
$(\si')^{a_1 \ldots a_{k'}} \in \cE^{(a_1 \ldots a_{k'})_0}[\de']$
and $f \in \cE[w]$, $\de',w \in \bbR$.
We shall construct $Q_{k',\ell}$ by an inductive procedure.
The main step is the construction of $Q_{k',\ell+1}^{\si'}$ from 
$Q_{k',\ell}^{\si'}$. 

\begin{proposition} \label{ind}
Fix $\de' \in \bbR$ and assume
there is an explicit construction of the quantization 
$Q_{k',\ell}^{\si'}: \cE[w] \to \cE[w+\de'-2\ell]$, $k',\ell \geq 0$ 
with the leading term
$\si^{a_1 \ldots a_{k'}} \na_{a_1} \ldots \na_{a_{k'}} \De^\ell$
for every $w \in \bbR$. 
Also assume $Q_{k',\ell}$ is strongly invariant
in the sense of Theorem \ref{quant0}. Then
\begin{align*} 
&\wt{Q}_{k',l}^{\si'} := D^B Q_{k',\ell}^{\si'} D_B: 
  \cE[w] \to \cE[w+\de'-2(\ell+1)], \\
&\wt{Q}_{k',\ell}^{\si'}(f) = -(\de'-\ell)(n+2\de'+2(k'-\ell)-2)
  \si^{a_1 \ldots a_{k'}} \na_{a_1} \ldots \na_{a_{k'}} \De^{\ell+1}
  + lot
\end{align*}
for every $w \in \bbR$. Here ``$lot$'' denotes lower order terms.

The operator $\wt{Q}_{k',l}: \cE^{(a_1 \ldots a_{k'})_0}[\de'] \times \cE[w]
\to \cE[w+\de'-2(\ell+1)]$ is a conformally invariant bilinear operator.
Moreover, it is strongly invariant in the sense of Theorem \ref{quant0}.
We put $Q_{k',l+1}^{\si'} := \wt{Q}_{k',l}^{\si'}$.
\end{proposition}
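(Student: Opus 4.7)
The plan has two parts: strong conformal invariance via a composition argument, and the explicit leading-term computation on flat $\bbR^{p,q}$ where the ``$ct$'' corrections in \nn{nacomp} and \nn{Decomp} vanish.

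For invariance: $D_A$ is strongly invariant by the remark following \nn{Dform}, and hence so is $D^B = h^{BA}D_A$ since the tractor metric is parallel. Composition of strongly invariant operators being strongly invariant, $\wt{Q}_{k',\ell}^{\si'} := D^B\circ Q_{k',\ell}^{\si'}\circ D_B$ is strongly invariant in the sense of Theorem \ref{quant0}; the claimed bilinear conformal invariance of $\cE^{(a_1\cdots a_{k'})_0}[\de'] \times \cE[w] \to \cE[w+\de'-2(\ell+1)]$ is a special case.

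For the leading term I work on $\bbR^{p,q}$, so $\V_{ab} = 0$, $\J = 0$, $\Box = \De$. Using \nn{Dform}, $D_B f = (n+2w-2)w\, Y_B f + (n+2w-2)\, Z_{Bb}\na^b f - X_B\De f$. By strong invariance the leading piece $\si^{a_1\cdots a_{k'}}\na_{(a_1}\cdots\na_{a_{k'})_0}\De^\ell$ of $Q_{k',\ell}^{\si'}$ acts on this $\cE_B$-valued density via the coupled Levi--Civita--tractor connection. Pushing $\De^\ell$ past $Y_B,Z_{Bb},X_B$ using \nn{Decomp} and then the $\si$-contracted symmetric derivatives using \nn{nacomp}, one writes $G_B := Q_{k',\ell}^{\si'}(D_B f)$ as the sum of a $Y_B U$ piece, two $Z_{Bb}$ pieces, and an $X_B V$ piece, where $U := \si^{\cdots}\na_{a_1}\cdots\na_{a_{k'}}\De^\ell f$ and $V := \si^{\cdots}\na_{a_1}\cdots\na_{a_{k'}}\De^{\ell+1} f$, with coefficients that are explicit polynomials in $w,\de',\ell,k',n$.

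Applying $D^B$ at weight $w^{(1)} := w-1+\de'-2\ell$ and setting $M := n+2w^{(1)}-2$, one has $D^B G_B = Mw^{(1)}\, Y^B G_B + M\, Z^B{}_a \na^a G_B - X^B\De G_B$. The basic contractions $Y^B X_B = 1 = X^B Y_B$, $Z^B{}_a Z_B^a = n$ (all other mixed pairings vanishing), together with \nn{XYZ} and the flat-space identity $\na^a X_B = Z_B^a$, force each summand to collapse to a scalar multiple of $V$ plus lower order: $Y^B G_B$ recovers $-V$ from the $X_B V$ component, $X^B\De G_B$ picks out the $Y_B$ parts generated by \nn{Decomp} on each piece of $G_B$, and $Z^B{}_a\na^a G_B$ collects the divergences of the $Z_{Bb}$ pieces together with the $-nV$ contribution from $\na^a$ striking $X_B$ inside $-X_B V$. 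The potential order-$(k'+2\ell+4)$ terms cancel, reflecting the flat-space identity $D^A D_A=0$ on scalar densities.

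The main obstacle is the concluding arithmetic: summing the three scalar contributions and verifying that all $w$-dependence cancels, leaving the factored coefficient $-(\de'-\ell)(n+2\de'+2(k'-\ell)-2)$. A direct check in the degenerate case $k'=\ell=0$, where $Q_{0,0}^{\si'}$ is multiplication by $\si'$, yields $-\de'(n+2\de'-2)$, in agreement with the claim. The general case is a longer but entirely polynomial computation in $m = n+2w-2$, $\de',\ell,k',n$; collapse of the $w$-dependence to the stated form is the key algebraic miracle. Setting $Q_{k',\ell+1}^{\si'} := \wt{Q}_{k',\ell}^{\si'}$ then completes the inductive step, with strong invariance inherited from the first part.
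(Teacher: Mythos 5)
Your setup follows the paper's proof exactly: the invariance argument (strong invariance of $Q_{k',\ell}$ lets it act on $\cE_B[w-1]$, the tractor-$D$ is strongly invariant, and compositions of strongly invariant operators are strongly invariant) is the same, and your leading-term computation on flat $\bbR^{p,q}$ --- decomposing each of the two tractor-$D$'s into its three components via \nn{Dform} and commuting $\na_{(a_1}\cdots\na_{a_{k'})_0}\De^\ell$ past $Y_B$, $Z_B^b$, $X_B$ using \nn{nacomp}, \nn{Decomp} and \nn{XYZ} --- is precisely the nine-term computation \nn{YY}--\nn{XX} carried out in the paper. Your structural claims are all correct: only the order-$(k'+2\ell+2)$ part survives (the would-be top-order term dies because $X^BX_B=0$, consistent with your $D^AD_A=0$ remark), derivatives landing on $\si'$ only produce lower-order terms, and the $w$-dependence must cancel.

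The genuine gap is that you never actually compute the coefficient. The whole content of the proposition --- and the origin of the critical sets $\Si'_{k',\ell}$, $\Si''_{k',\ell}$ in Theorem \ref{quant} --- is the exact factorization $-(\de'-\ell)(n+2\de'+2(k'-\ell)-2)$, and you replace its derivation by a single consistency check at $k'=\ell=0$. One evaluation point cannot establish a polynomial identity in $(k',\ell,\de',n,w)$; in particular it says nothing about the $\ell$- and $k'$-dependence of the two factors, which is exactly what determines which weights are critical, nor does it confirm the cancellation of $w$ in general. The paper does this bookkeeping explicitly: it evaluates each of the nine contractions (e.g.\ $-w'(n+2w'-2)$ from \nn{YX}, $(n+2w'-2)(n+2w-2)$ from \nn{ZZ}, $-(n+2w'-2)(2\ell+k'+n)$ from \nn{ZX}, $-(n+2w-2)(-2(\ell+1)-k')$ from \nn{XZ}, $-(\ell+1)(n+2\ell+2k')$ from \nn{XX}) and sums them with $w'=w+\de'-2\ell-1$ to obtain the stated factorization. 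To complete your argument you must exhibit these nine scalars and perform the summation; the contraction identities you list are the correct tools, but the ``algebraic miracle'' has to be demonstrated rather than invoked.
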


\begin{proof}
We shall start with the discussion on the invariance. Since
$Q_{k',\ell}: \cE^{(a_1 \ldots a_{k'})_0}[\de'] \times \cE[w]
\to \cE[w+\de'-2\ell]$ is assumed to be strongly invariant (in the sense of 
Theorem \ref{quant0}), it is also
invariant as $Q_{k',l}: \cE^{(a_1 \ldots a_{k'})_0}[\de'] \times \cE_B[w]
\to \cE[w+\de'-2\ell]$. Therefore the composition 
$$ 
\cE^{(a_1 \ldots a_{k'})_0}[\de'] \times \cE[w] 
\stackrel{\id \times D_B}{\longrightarrow}
\cE^{(a_1 \ldots a_{k'})_0}[\de'] \times \cE_B[w-1]
\stackrel{Q_{k',\ell}}{\longrightarrow}
\cE[(w-1)+\de'-2\ell] \stackrel{D^B}{\longrightarrow}
\cE[w+\de'-2\ell-2] $$
is a conformally invariant bilinear operator. 
The strong invariance of $\wt{Q}_{k',l}$ follows from the strong 
invariance of $D^B$.

It remains to compute the leading symbol of $\wt{Q}_{k',\ell}^{\si'}$,
we shall do it by a direct
computation. The operator $D^B$ is explicitly given by the sum
of three terms on the right hand side of \nn{Dform}. Decomposing both 
application
of tractor $D$ in the formula for $\wt{Q}_{k',l}^{\si'}$ accordingly,
we obtain overall $9$ leading terms. 
Note $Q_{k',\ell}^{\si'} D_B f
= \bigl[ (\si')^{a_1 \ldots a_{k'}} \na_{a_1} \dots \na_{a_{k'}} + lot
\bigr] \De^\ell D_Bf \in \cE_B[w']$ 
where $f \in \cE[w]$ and $w' = w+\de'-2\ell-1$. 

Although the tractor $D$ 
is of the second order and $Q_{k',\ell}^{\si'}$ is of 
the order $k'+2\ell$, the leading term of $\wt{Q}_{k',\ell}^{\si'}$ 
turns out to have order $k'+2\ell+2$ in the generic case. 
(We use the tractor $D$ twice so one might expect the order $k'+2\ell+4$.)
To show this we will collect all terms of the order at least $k'+2\ell+2$. 
In fact, we shall do this in details only for the leading term 
$(\si')^{a_1 \ldots a_{k'}} \na_{a_1} \ldots \na_{a_{k'}} \De^\ell$ of 
$Q_{k',\ell}^{\si'}$.
But it will be obvious from the form of all 9 summands this is sufficient.
Below we shall use
$lot_{\leq o}$ to denote terms of the order at most $o$, 
$lot_{<o}$ will denotes cases of order smaller than $o$. 
To simplify the notation
we will henceforth work with the Euclidean metric; then all terms 
on the right hand side of \nn{nacomp} and \nn{Decomp} denoted by
``$ct$'' vanish.

We shall start with $w'(n+2w'-2)Y^B Q_{k',\ell}^{\si'} D_Bf$; 
decomposing $D_B$
here according to \nn{Dform} yields first three summands.
The first one is
\begin{eqnarray} \label{YY}
\begin{split}
  \quad\quad
  &w'(n+2w'-2) Y^B Q_{k',\ell}^{\si'} \bigl[ w(n+2w-2)Y_B f \bigr] = \\
  &= w'(n+2w'-2)w(n+2w-2)Y^B (\si')^{a_1 \ldots a_{k'}} 
  \na_{a_1} \ldots \na_{a_{k'}} \De^\ell Y_B f  = 0. 
\end{split}
\end{eqnarray}
The reason is that the tractor $Y^B$ contracts nontrivially
only with $X_B$ according to \nn{trm} and $X_B$ appear on the 
right hand side of $\na_{(a_1} \dots \na_{a_{k'})_0} \De^\ell Y_Bf$ 
according to \nn{nacomp} and \nn{Decomp} involves curvature.  
Analogously we obtain
\begin{equation} \label{YZ}
  w'(n+2w'-2) Y^B Q_{k',\ell}^{\si'} [(n+2w-2) Z_B^b \na_b f] = 0. 
\end{equation}
Looking at the $X_B$-terms of $Q_{k',\ell}^{\si'} (-X_B \De f)$, we see from
\nn{nacomp} and \nn{Decomp} that
\begin{eqnarray} \label{YX}
\begin{split}
  &w'(n+2w'-2) Y^B Q_{k',\ell}^{\si'} [-X_B \De f] = \\
  &=- w'(n+2w'-2) (\si')^{a_1 \ldots a_{k'}} \na_{a_1} \ldots \na_{a_{k'}} 
   \De^{\ell+1}f + lot_{\leq k'+2\ell+1}. 
\end{split}
\end{eqnarray}

Next we shall compute $(n+2w'-2)Z^{Bb} \na_b Q_{k',\ell}^{\si'} D_Bf$, 
we obtain again three summands. 
This is contraction of $(n+2w'-2)Z_B^b$ with
\begin{align*}
\na_b Q_{k',\ell}^{\si'} D_Bf =
&\bigl[ (\na_b (\si')^{a_1 \ldots a_{k'}}) \na_{a_1} \ldots \na_{a_{k'}}
 +(\si')^{a_1 \ldots a_{k'}} \na_b \na_{a_1} \ldots \na_{a_{k'}} \\
&+lot_{< k'} \bigr] \De^\ell D_B f .
\end{align*}
We need to discuss here only the first two terms
in the square bracket here and
only $Z_B^{\bar{b}}$-terms according to \nn{trm}. 
First, it is easy to see that
$$
(n+2w'-2)Z^{Bb}
(\na_b (\si')^{a_1 \ldots a_{k'}}) 
\na_{a_1} \ldots \na_{a_{k'}} \De^\ell D_B f = lot_{\leq k'+2\ell+1}.
$$
(The component $w(n+2w-2)Y_B$ of $D_B$ does not contribute to the right 
hand side of the previous display at all and the remaining components
$(n+2w-2)Z_B^{\bar{b}} \na_{\bar{b}}$ and $-X_B\De$ contribute by terms of 
the equal $\leq k'+2\ell+1$.) Hence it remains to collect
$Z_B^{\bar{b}}$-terms of 
$(\si')^{a_1 \ldots a_{k'}} \na_b \na_{a_1} \ldots \na_{a_{k'}} \De^\ell D_B f$.
Applying \nn{Dform} to $D_B$, we obtain three more summands. A short 
computation reveals that
\begin{align}
&(n+2w'-2)Z^{Bb} (\si')^{a_1 \ldots a_{k'}} \na_b \na_{a_1} \ldots \na_{a_{k'}} 
 \De^\ell \bigl[ w(n+2w-2)Y_B f \bigr]  = 0, \label{ZY} \\
&(n+2w'-2)Z^{Bb} (\si')^{a_1 \ldots a_{k'}} 
 \na_b \na_{a_{1'}} \ldots \na_{a_{k'}} 
 \De^\ell \bigl[ (n+2w-2)Z_B^{\bar{\,b}} \na_{\bar{b}} f \bigr] = 
 \label{ZZ}\\
&\quad =(n+2w'-2)(n+2w-2)Z^{Bb} (\si')^{a_1 \ldots a_{k'}} \na_b Z_B^{\bar{b}} 
 \na_{a_1} \ldots \na_{a_{k'}} \De^\ell \na_{\bar{b}} f = \notag \\
&\quad= (n+2w'-2)(n+2w-2) \si^{a_1 \ldots a_{k'}} \De^{\ell+1} f
 \notag \\
&(n+2w'-2) Z^{Bb} \si^{a_1 \ldots a_{k'}} \na_b \na_{a_1} \ldots \na_{a_{k'}} 
 \De^\ell \bigl[ -X_B \De f \bigr] =
 \label{ZX} \\
&\quad =-(n+2w'-2) Z^{Bb} \si^{a_1 \ldots a_{k'}} 
 \na_b \na_{a_1} \ldots \na_{a_{k'}}
 \bigl[ 2\ell Z_B^{\bar{b}} \na_{\bar{b}} \De^\ell +
 X_B \De^{\ell+1} \bigr] f = \notag \\
&\quad =-(n+2w'-2) Z^{Bb} (\si')^{a_1 \ldots a_{k'}} \na_b \bigl[
 X_B \na_{a_1} \ldots \na_{a_{k'}} \De^{\ell+1} \notag \\
&\qquad\qquad + Z_B^{\bar{b}} \bigl( 
 2\ell \na_{a_1} \ldots \na_{a_{k'}} \na_{\bar{b}} \De^\ell + 
 k'\bg_{\bar{b}a_1} \na_{a_2} \ldots \na_{a_{k'}} \De^{\ell+1} \bigl) \bigr] f 
 = \notag \\
&\quad = -(n+2w'-2) (2\ell+k'+n) (\si')^{a_1 \ldots a_{k'}} 
 \na_{a_1} \ldots \na_{a_{k'}} \De^{\ell+1} f. \notag
\end{align}
Beside the fact that $Z^{Bb}$ contracts nontrivially only with 
$Z_B^{\bar{b}}$,
we have used \nn{Decomp} to commute $\De^\ell$ with $Z_B^{\bar{b}}$,
\nn{nacomp} to commute $\na_{a_1} \ldots \na_{a_{k'}}$ with $Z_B^{\bar{b}}$ 
and \nn{XYZ} to commute $\na_b$ with $Z_B^{\bar{b}}$.

It remains to compute $-X^B \De Q_{k',\ell}^{\si'} D_Bf$. The computation
is analogous to previous cases but getting more tedious. 
First we observe
\begin{align*}
-X^B \De Q_{k',\ell}^{\si'} D_Bf 
=-X^B \bigl[ 
&(\De (\si')^{a_1 \ldots a_{k'}}) \na_{a_1} \ldots \na_{a_{k'}} 
 +2(\na^p (\si')^{a_1 \ldots a_{k'}}) \na_p \na_{a_1} \ldots \na_{a_{k'}} \\ 
&+(\si')^{a_1 \ldots a_{k'}} \na_{a_1} \ldots \na_{a_{k'}} \De 
 + lot_{\leq k'-1} \bigr] \De^\ell D_B f.
\end{align*}
We shall discuss only the first three terms in the square bracket here.
One can compute that
$$
-X^B \bigl[ (\De (\si')^{a_1 \ldots a_{k'}}) \na_{a_1} \ldots \na_{a_{k'}} 
+2(\na^p (\si')^{a_1 \ldots a_{k'}}) \na_p \na_{a_1} \ldots \na_{a_{k'}} \bigr] 
\De^\ell D_B f
= lot_{\leq k'+2\ell+1}
$$
so it remains to compute only
$-X^B (\si')^{a_1 \ldots a_{k'}} \na_{a_1} \ldots \na_{a_{k'}} 
\De^{\ell+1}D_B f$.
This yields three summands according to \nn{Dform}. After some computation
we obtain
\begin{align}
&-X^B (\si')^{a_1 \ldots a_{k'}} \na_{a_1} \ldots \na_{a_{k'}} 
 \De^{\ell+1} \bigl[ w(n+2w-2)Y_B f \bigr]  = \label{XY} \\
&\qquad = -w(n+2w-2) (\si')^{a_1 \ldots a_{k'}} \na_{a_1} \ldots \na_{a_{k'}} 
 \De^{\ell+1} f, \notag \\
&- X^B (\si')^{a_1 \ldots a_{'k}} \na_{a_1} \ldots \na_{a_{'k}} 
 \De^{\ell+1} \bigl[ (n+2w-2)Z_B^{\bar{\,b}} \na_{\bar{b}} f \bigr] = 
 \label{XZ}\\
&\qquad = -(n+2w-2) X^B (\si')^{a_1 \ldots a_{k'}} 
 \na_{a_1} \ldots \na_{a_{k'}} \notag \\
&\qquad\qquad \bigl[-2(\ell+1) Y^B \na^{\bar{\,b}} \De^\ell \na_{\bar{b}} +   
 Z_B^{\bar{\,b}} \na_{\bar{b}} \De^{\ell+1} \bigr] f = \notag \\
&\qquad= -(n+2w-2) \bigl[ -2(\ell+1) -k' \bigr]
 (\si')^{a_1 \ldots a_{k'}} \na_{a_1} \ldots \na_{a_{k'}} \De^{\ell+1} f,
 \notag \\
&- X^B (\si')^{a_1 \ldots a_{k'}} \na_{a_1} \ldots \na_{a_{k'}} 
 \De^{\ell+1} \bigl[ -X_B \De f \bigr] =
 X^B \si^{a_1 \ldots a_{k'}} \na_{a_1} \ldots \na_{a_{k'}}
 \label{XX} \\
&\qquad\qquad 
 \bigl[ -(\ell+1)(n+2\ell) Y_B \De^{\ell+1} 
 + 2(\ell+1) Z_B^{\,\bar{b}} \na_{\bar{b}} \De^{\ell+1}
 + X_B \De^{\ell+2} \bigr] f = \notag \\
&\qquad= \bigl[ -(\ell+1)(n+2\ell) - 2k'(\ell+1)  \bigr] 
 (\si')^{a_1 \ldots a_{k'}} \na_{a_1} \ldots \na_{a_{k'}} \De^{\ell+1} f. 
 \notag
\end{align}

The last step of the proof is to sum up the right hand sides of 9
relations \nn{YY}, \nn{YZ}, \nn{YX}, \nn{ZY}, \nn{ZZ}, \nn{ZX} and
\nn{XY}, \nn{XZ}, \nn{XX} above. That is, we need to compute the scalar
\begin{gather*}
-w'(n+2w'-2) + (n+2w'-2)(n+2w-2) - (n+2w'-2) (2\ell+k'+1) \\
-w(n+2w-2) + (n+2w-2)(2\ell+k'+2) - (\ell+1)(n+2\ell+2k')
\end{gather*}
where $w' = w+\de'-2\ell-1$. This requires some work, the result is
$-(\de'-\ell)(n+2\de'+2k'-2\ell-2)$ and the proposition follows.
Note the resulting scalar does not depend on $w$; this is a good 
verification that the computations throughout the proof are correct.
\end{proof}

\begin{theorem} \label{quant}
Let $k',\ell \geq 0$, 
$(\si')^{a_1 \ldots a_{k'}} \in \cE^{(a_1 \ldots a_{k'})_0}[\de']$
and $f \in \cE[w]$, $\de',w \in \bbR$.
Then
$$ Q_{k',\ell}^{\si'} := D^{B_1} \cdots D^{B_{k'}} Q_{k',0}^{\si'} 
D_{B_{k'}} \cdots D_{B_1}: \cE[w] \to \cE[w+\de'-2\ell] $$ 
defines the conformally invariant quantization with 
the leading term $(\si')^{a_1 \ldots a_{k'}}\na_{a_1} \ldots \na_{a_{k'}} 
\De^\ell$ (up to a sign) for every weight $\de'$ satisfying
\begin{equation} \label{res}
\de' \not\in \Si_{k',\ell} := 
\Si_{k',0} \cup \Si_{k',\ell}' \cup \Si_{k',\ell}''
\end{equation}
where $\Si_{k',0}$ is given by \nn{crit0}, 
\begin{equation}
\Si_{k',\ell}' = \{ (j-1) \mid j = 1,\ldots,\ell \}, \quad
\Si_{k',\ell}'' = \{ -\frac{1}{2} (n+2k'-2j) \mid j = 1,\ldots,\ell \}
\quad \mbox{for} \  \ell \geq 1. 
\end{equation}
We put $\Si_{k',0}' = \Si_{k',0}'' := \emptyset$.
Moreover, $Q_{k',\ell}^{\si'}$ is strongly invariant
in the sense of Theorem \ref{quant0}. 
\end{theorem}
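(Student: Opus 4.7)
The plan is to obtain Theorem \ref{quant} essentially as a corollary of Theorem \ref{quant0} combined with $\ell$-fold iteration of Proposition \ref{ind}. I would proceed by induction on $\ell$: the base case $\ell = 0$ is exactly Theorem \ref{quant0}, which supplies $Q_{k',0}^{\si'}$ as a strongly invariant operator with the correct leading symbol whenever $\de' \notin \Si_{k',0}$. Unrolling the recursion produces the $\ell$-fold nested tractor-$D$ formula displayed in the statement, with one pair $D^{B_j}(\,\cdot\,)D_{B_j}$ contributed per iteration.

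For the inductive step, assuming $Q_{k',\ell}^{\si'}$ has been constructed with the required leading term and is strongly invariant, Proposition \ref{ind} gives
$$Q_{k',\ell+1}^{\si'} := D^B Q_{k',\ell}^{\si'} D_B,$$
which is again strongly invariant, and whose leading coefficient (relative to $(\si')^{a_1\ldots a_{k'}}\na_{a_1}\cdots\na_{a_{k'}}\De^{\ell+1}$) equals $-(\de'-\ell)(n+2\de'+2(k'-\ell)-2)$. Telescoping these contributions, the cumulative leading coefficient of $Q_{k',\ell}^{\si'}$ is
$$\prod_{j=0}^{\ell-1}\bigl[-(\de'-j)\bigl(n+2\de'+2(k'-j)-2\bigr)\bigr],$$
and this is nonzero precisely when $\de' \notin \Si_{k',\ell}' \cup \Si_{k',\ell}''$.

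To match the critical set, note that the factors $\de' - j$ for $j = 0,1,\ldots,\ell-1$ vanish on $\Si_{k',\ell}' = \{j-1 \mid j=1,\ldots,\ell\}$, while solving $n + 2\de' + 2(k'-j) - 2 = 0$ and reindexing $j \mapsto j-1$ through $\{1,\ldots,\ell\}$ yields $\de' = -\tfrac{1}{2}(n+2k'-2j)$, which is exactly $\Si_{k',\ell}''$. Adjoining the base-case critical set $\Si_{k',0}$, which controls when $Q_{k',0}^{\si'}$ itself already fails to have the correct leading symbol, produces the full critical set $\Si_{k',\ell}$ of \nn{res}. Strong invariance is preserved automatically throughout the induction since each step is a composition of strongly invariant operators ($D_B$ and $D^B$ on the outside, the strongly invariant $Q_{k',\ell}^{\si'}$ coupled to a tractor index $B$ in the middle), and strong invariance is closed under composition.

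There is no genuine obstacle here beyond careful bookkeeping: the entire analytic content, namely the leading-coefficient computation and the verification that $D^B(\,\cdot\,)D_B$ preserves strong invariance, was already done in Proposition \ref{ind}. The only task left in the actual write-up is to telescope the per-step scalar factor across the iteration and check that its zeros reindex precisely to $\Si_{k',\ell}' \cup \Si_{k',\ell}''$; the nontrivial ``up to a sign'' caveat in the statement accounts for the overall sign $(-1)^\ell$ produced by this product.
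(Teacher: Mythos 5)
Your proposal is correct and follows essentially the same route as the paper, which likewise obtains Theorem \ref{quant} by induction on $\ell$ from Theorem \ref{quant0} and Proposition \ref{ind}; you merely make explicit the telescoping of the per-step scalar $-(\de'-j)(n+2\de'+2(k'-j)-2)$ and the reindexing of its zeros to $\Si_{k',\ell}'\cup\Si_{k',\ell}''$, which the paper leaves as ``easily follows.'' The bookkeeping and the strong-invariance argument both check out.
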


\begin{proof}
The set of critical weights $\Si_{k',\ell}$ easily follows 
(by induction with respect to $\ell$) from
Proposition \ref{ind}. Since the tractor $D$ and $Q_{k',0}^{\si'}$ are
strongly invariant, the last claim is obvious.
\end{proof}

\begin{remark*}
Let us note the previous theorem yields an inductive formula for the 
conformal quantization as 
$Q_{k',\ell+1}^{\si'} = D^B Q_{k',\ell}^{\si'} D_B$. Similarly, we can 
describe the set of critical weights inductively as 
$\Si_{k',\ell+1} = 
\Si_{k',\ell} \cup \{ \ell, -\frac{1}{2} (n+2k'-2\ell-2)\}$
where $\Si_{k',0}$ is given by \nn{crit0}.
\end{remark*}

\section{Critical weights}
\label{scrit}

We shall discuss the cases $\de' \in \Si_{k',\ell}$ from \nn{res} in detail. 
First, a simple calculation shows

\begin{lemma} \label{disjoint}
(i) $2\ell \not\in \Si_{k',\ell}$ for all $k',\ell \geq 0$. \\
(ii) The sets $\Si_{k',0}$ and $\Si_{k',\ell}' \cup \Si_{k',\ell}''$ are 
disjoint. \qed
\end{lemma}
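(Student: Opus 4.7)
The plan is to unwind each of the three defining sets and verify the claimed non-membership/disjointness by elementary inequalities, using only that $n \geq 3$ and $k',\ell \geq 0$.

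For part (i), I would check that $2\ell$ avoids each of the three pieces of $\Si_{k',\ell}$ separately. For $\Si_{k',\ell}' = \{0,1,\ldots,\ell-1\}$, the value $2\ell$ is strictly larger than every element (the case $\ell=0$ being vacuous). For $\Si_{k',0}$, every element $-(n+k'+i-2)$ with $i \geq 1$ and $k' \geq 1$ satisfies $n+k'+i-2 \geq n \geq 3$, so the elements are all negative, while $2\ell \geq 0$. For $\Si_{k',\ell}''$, the equation $2\ell = -\tfrac{1}{2}(n+2k'-2j)$ rearranges to $2j = n+2k'+4\ell$, which forces $j \geq (n+2k')/2 + 2\ell > \ell$ since $n \geq 3$, violating the range $1 \leq j \leq \ell$.

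For part (ii), I would similarly split into two pairwise intersections. For $\Si_{k',0} \cap \Si_{k',\ell}'$: elements of $\Si_{k',0}$ are $\leq -(n+k'-1) \leq -n \leq -3$, while elements of $\Si_{k',\ell}'$ are $\geq 0$, ruling out any common element. For $\Si_{k',0} \cap \Si_{k',\ell}''$: equating $-(n+k'+i-2) = -\tfrac{1}{2}(n+2k'-2j)$ and simplifying gives $2j = 4 - n - 2i$; since $n \geq 3$ and $i \geq 1$ the right-hand side is at most $-1$, yet $2j \geq 2$, a contradiction.

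Since the statement is tagged with \qed in the excerpt and marked as a simple calculation, I do not expect any genuine obstacle; the only thing to be careful about is handling the degenerate cases $k'=0$ or $\ell=0$, where some of the sets are empty and the claim becomes vacuous. All inequalities used are strict and rely only on $n \geq 3$, so the argument is uniform across dimensions and signatures.
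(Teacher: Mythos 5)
Your proposal is correct and is exactly the elementary case-by-case calculation the paper has in mind: the lemma is stated with \qed and prefaced only by ``a simple calculation shows,'' so the paper omits the verification entirely, and your unwinding of the three sets with the inequalities $n\geq 3$, $i\geq 1$, $1\leq j\leq \ell$ supplies precisely that omitted computation. All the arithmetic checks out, including the careful handling of the vacuous cases $k'=0$ and $\ell=0$.
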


The symbols of the quantization $\cE[w] \to \cE[w]$ (i.e.\ with 
zero shift) are of a special interest \cite{DLOex}. The flat quantization
developed there is never critical for such symbols \cite[3.1]{DLOex}.
The previous lemma (i) recovers this observation for the curved quantization
$Q_{k',\ell}^{\si'}$.

The critical weights are closely related to existence to natural 
linear conformal 
operators. They are completely classified in the locally flat case
\cite[(3.1)]{BoCo} (or see the summary in \cite[Section 3]{EaSl}).
Using this we obtain

\begin{proposition} \label{critop}
Assume the manifold $M$ is conformally flat. 
If $\de' \in \Si_{k',\ell}$ then there exists a nontrivial
natural linear conformal operator on $\cE^{(a_1\ldots a_{k'})_0}[\de']$
as follows
\begin{align*}
& \cE^{(a_1 \ldots a_{k'})_0}[\de'] \longrightarrow 
\cE^{(a_1 \ldots a_{i-1})_0}[\de'], && \de = -(n+k'+i-2) \in \Si_{k',0}, \\
& \cE^{(a_1 \ldots a_{k'})_0}[\de'] \longrightarrow 
\cE^{(a_1 \ldots a_{{k'}+j})_0}[\de'-2j], && \de' = j-1 \in \Si_{{k'},\ell}', 
\\
& \cE^{(a_1 \ldots a_{k'})_0}[\de'] \longrightarrow 
\cE^{(a_1 \ldots a_{k'})_0}[\de'-2j], && \de' = -\frac{1}{2}(n+2k'-2j) \in 
\Si_{k',\ell}''.
\end{align*}
\end{proposition}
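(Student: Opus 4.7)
The plan is to appeal to the Boe--Collingwood classification \cite[(3.1)]{BoCo} (summarized, in the form we need, in \cite[Section 3]{EaSl}) of natural linear conformal operators between irreducible bundles on a conformally flat manifold. Since the critical weights $\Si_{k',\ell}$ consist of three disjoint families (see Lemma \ref{disjoint}), I would handle each family in turn and, in each case, exhibit an explicit operator whose source is $\cE^{(a_1\ldots a_{k'})_0}[\de']$ and whose target is the bundle indicated in the statement, then verify that it is conformally invariant precisely at the asserted weight.

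For $\de' = -(n+k'+i-2) \in \Si_{k',0}$ with $1\le i\le k'$, the natural candidate is the $(k'-i+1)$-fold divergence
\[
\si^{a_1\ldots a_{k'}} \longmapsto \na_{a_i}\cdots\na_{a_{k'}}\si^{a_1\ldots a_{k'}},
\]
which preserves the conformal weight $\de'$, lowers the symmetric tracefree valence by $k'-i+1$, and whose invariance can be checked directly from \nn{grad_trans_gen} (the terms $\Up_{a_j}$ and $\bg_{a_j \bar a}\Up^p$ generated on indices to be contracted must cancel, which forces exactly the stated value of $\de'$). For $\de' = j-1 \in \Si_{k',\ell}'$ with $1\le j\le \ell$, the operator is the $j$-fold symmetric tracefree gradient
\[
\si^{a_1\ldots a_{k'}} \longmapsto \na^{(a_{k'+1}}\cdots\na^{a_{k'+j}}\si^{a_1\ldots a_{k'})_0},
\]
landing in $\cE^{(a_1\ldots a_{k'+j})_0}[\de'-2j]$; these are the higher analogues of the conformal Killing operators. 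Finally, for $\de' = -\tfrac12(n+2k'-2j) \in \Si_{k',\ell}''$ with $1\le j\le \ell$, the operator is the GJMS-type $2j$-th order power-of-Laplacian operator on the bundle $\cE^{(a_1\ldots a_{k'})_0}$, with leading term $\De^j$ and the critical weight matching the scalar GJMS value shifted by $-k'$ to account for the tensor type.

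To verify the claimed conformal invariance (rather than just cite \cite{BoCo}), I would produce each operator as a composition of strongly invariant tractor operators: the divergence via contraction of the tractor-$D$ operator with a suitable splitting operator mapping $\cE^{(a_1\ldots a_{k'})_0}[\de']$ into $\bar\cE_{\form{(A_1\ldots A_{k'})_0}}[\de']$ (as in the proof of Theorem \ref{quant0}); the symmetric tracefree gradient via iterated application of $\na$ followed by projection to the symmetric tracefree component; and the GJMS-type operators via $(D^B D_B)^j$ applied to a splitting of $\cE^{(a_1\ldots a_{k'})_0}[\de']$ into a standard tractor bundle, noting that the obstruction to this composition landing in the subbundle lies exactly at the claimed weight.

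The main obstacle is the GJMS family: one must confirm that the tractor construction $(D^B D_B)^j$ (which is strongly invariant and of order $2j$) indeed has nonvanishing leading symbol $\De^j$ after projection, and that it does so only in the conformally flat case at the stated critical weight. This is precisely the analogue of the scalar curvature obstruction to GJMS in even dimensions, and in the flat setting it follows from the explicit formula for the iterated tractor $D$; alternatively one can invoke the Boe--Collingwood existence of the corresponding standard operator in the Hasse diagram for $\frak{so}_{p+1,q+1}/\frak{p}$. Since we are on a conformally flat manifold, no curvature obstructions to these standard operators arise, and the classification guarantees nontriviality.
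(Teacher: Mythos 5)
Your proposal is correct and follows the same route as the paper, which establishes this proposition simply by appealing to the Boe--Collingwood classification of natural linear conformal operators in the flat case and identifying the three families as divergence-type, conformal-Killing-type and Laplacian-power operators of orders $k'-i+1$, $j$ and $2j$ respectively. Your explicit formulas and weight verifications actually supply more detail than the paper, which offers no argument beyond the citation and that identification.
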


The case $\de' \in \Si_{k',0}$ is a divergence type operator of the 
order $k'-i+1$, $\de' \in \Si_{k',\ell}'$ is the conformal 
Killing operator of the order $j$ and $\de' \in \Si_{k',\ell}''$ yields 
a Laplacian type operator of the order $2j$. Note this operator is not 
unique as generally $\Si_{k',\ell}' \cap \Si_{k',\ell}'' \not= \emptyset$.

\vspace{1ex}

The operator $Q_{k',\ell}^{\si'}$ does not provide a conformally invariant 
quantization for $\de' \in \Si_{k',\ell}$. Such a quantization can 
exists, though, for certain $w$, as observed in lower order cases
\cite{DuOvHam,Dj3}. (Note it is not unique even in the flat case then.)
Assuming $\de' \in \Si_{k',\ell}$, we shall find such $w$ for all $k',\ell$
in the flat setting; the curved case is more 
involved. In particular, it is closely related to existence of natural 
linear conformal operators
\begin{align*}
&S_p: \cE[p-1] \longrightarrow \cE_{(a_1 \ldots a_p)_0}[p-1], \quad
&&L_p: \cE[-n/2+p] \longrightarrow \cE[-n/2-p],\\
&S_p(f) = \na_{(a_1} \ldots \na_{a_p)_0}f + lot,
&&L_p(f) = \De^p f + lot,
\end{align*}
for $p \geq 1$ (so $p$ is \idx{not} an abstract index here).
If $n$ is odd or $M$ is conformally flat,
these operators exist for all $p \geq 1$.
In the curved case for $n$ even, 
$S_p$ exists for all $p \geq 1$ and $L_p$ exists for $1 \leq p \leq n$, 
see \cite{CSS,GJMS,GoHiNex}. They are strongly invariant 
(can be given by a strongly invariant formula) in the flat case;
in the curved case, $S_p$ is strongly invariant always and
$L_p$ only for $p<n$.

\begin{theorem} \label{critquant}
Assume $\de' \in \Si_{k',\ell}$ and $f \in \cE[w]$. 
Then there is always a choice of $w \in \bbR$ for which there is
a quantization $Q_{k',\ell}^{\si'}: \cE[w] \to \cE[w+\de]$ 
with the leading term 
$(\si')^{a_1 \ldots a_{k'}} \na_{a_1} \ldots \na_{a_{k'}} \De^\ell f$ 
in the flat case.
This is true also on curved manifols under an additional assumption
$\ell \leq n$. 

Explicitly, the quantization is given by formulae
\begin{align*}
&Q_{k',0}^{\si'} L_\ell: \cE[-n/2+\ell] \to \cE[\de'-n/2-\ell],
  &&\de' \in \Si_{k',\ell}' \cup \Si_{k',\ell}'' \\
&D^{B_1} \cdots D^{B_\ell} \io(\si') S_{k'} D_{B_1} \cdots D_{B_\ell}: 
  \cE[k'+\ell-1] \to \cE[\de'+k'-\ell-1],
  &&\de' \in \Si_{k',0}
\end{align*}
where $\io(\si')$ is the complete contraction of the image of $S_p$ 
with $\si'$.
\end{theorem}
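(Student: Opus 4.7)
The plan is to handle the two types of critical weights separately, corresponding to the two formulae given.

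\textbf{Case 1: $\de' \in \Si_{k',\ell}' \cup \Si_{k',\ell}''$.} By Lemma \ref{disjoint}(ii) one has $\de' \notin \Si_{k',0}$, so Theorem \ref{quant0} supplies the strongly invariant quantization $Q_{k',0}^{\si'}$ with the required leading symbol $(\si')^{a_1\ldots a_{k'}}\na_{a_1}\cdots\na_{a_{k'}}$ for every source weight. The conformal power of the Laplacian $L_\ell:\cE[-n/2+\ell]\to\cE[-n/2-\ell]$ is conformally invariant with leading term $\De^\ell$ whenever it exists, which is always in the flat case and under the hypothesis $\ell\le n$ in the curved case. Hence the composition $Q_{k',0}^{\si'} L_\ell$ is a conformally invariant bilinear operator with leading symbol $(\si')^{a_1\ldots a_{k'}}\na_{a_1}\cdots\na_{a_{k'}}\De^\ell$, and the choice $w=-n/2+\ell$ is forced by the source weight of $L_\ell$.

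\textbf{Case 2: $\de' \in \Si_{k',0}$.} Since $Q_{k',0}^{\si'}$ is unavailable here, we substitute the conformal Killing operator $S_{k'}:\cE[k'-1]\to\cE_{(a_1\ldots a_{k'})_0}[k'-1]$, which always exists and is always strongly invariant. For $f\in\cE[k'+\ell-1]$ the repeated tractor-$D$ yields $D_{B_1}\cdots D_{B_\ell}f \in \cE_{B_1\ldots B_\ell}[k'-1]$; applying $S_{k'}$ strongly invariantly, contracting with $\si'$ via $\io(\si')$, and sandwiching with $D^{B_1}\cdots D^{B_\ell}$ lands in $\cE[\de'+k'-\ell-1]$. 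The source weight $w=k'+\ell-1$ is forced by the weight shift $-\ell$ produced by the inner tractor-$D$'s combined with the source weight of $S_{k'}$; conformal invariance is immediate from invariance of each ingredient.

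\textbf{Leading-term verification.} The leading symbol of $\io(\si')S_{k'}$ is $(\si')^{a_1\ldots a_{k'}}\na_{(a_1}\cdots\na_{a_{k'})_0}$, which equals $(\si')^{a_1\ldots a_{k'}}\na_{a_1}\cdots\na_{a_{k'}}$ because $\si'$ is totally symmetric and trace-free. It then remains to check that each conjugation $D^B(\cdot)D_B$ supplies one power of $\De$ with a nonvanishing scalar coefficient. The computation mirrors that of Proposition \ref{ind}: one expands the tractor-$D$'s via \nn{Dform} and commutes the projectors $X_B$, $Y_B$, $Z_B^b$ through the derivatives $\na_{(a_1}\cdots\na_{a_{k'})_0}$ and $\De^\ell$ using \nn{XYZ}, \nn{nacomp} and \nn{Decomp}, collecting terms of top order $k'+2\ell$.

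\textbf{Main obstacle.} The delicate point is verifying that this resulting scalar factor is nonzero at the specific weight $w=k'+\ell-1$ and for every $\de'=-(n+k'+i-2)\in\Si_{k',0}$. The potential zero loci of the factors produced at each inductive step coincide with values in $\Si_{k',\ell}' \cup \Si_{k',\ell}''$, and by Lemma \ref{disjoint}(ii) these are disjoint from $\Si_{k',0}$; hence the scalar survives, and the argument is complete.
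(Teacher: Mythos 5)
Your proposal is correct and follows essentially the same route as the paper: Lemma \ref{disjoint}(ii) to guarantee $Q_{k',0}^{\si'}$ exists in the first case, substitution of $\io(\si')S_{k'}$ for $Q_{k',0}^{\si'}$ in the second, and the observation that the scalar factors produced by each conjugation $D^B(\cdot)D_B$ (as computed in Proposition \ref{ind}) vanish only on $\Si_{k',\ell}'\cup\Si_{k',\ell}''$, which is disjoint from $\Si_{k',0}$. Nothing essential is missing.
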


\begin{proof}
The conformal invariance is obvious (recall $S_{k'}$ has the source space 
$\cE[k'-1]$ and is strongly invariant). It remains to verify the displayed
operators have 
the required leading term (up to a nonzero multiple). In the case
$\de' \in \Si_{k',\ell}' \cup \Si_{k',\ell}$, this follows from the leading 
term of $L_\ell$, properties of $Q_{k',0}^{\si'}$ in Theorem \ref{quant} and 
Lemma \ref{disjoint} (ii).

Assume $\de' \in \Si_{k',0}$ and denote by  $\ol{Q}_{k',\ell}^{\si'}$ 
the displayed operator for such $\de'$. We need to compute the leading 
term of $\ol{Q}_{k',\ell}^{\si'}$. 
Observe the generic quantization $Q_{k',\ell}^{\si'}$ is constructed
in a similar way as $\ol{Q}_{k',\ell}^{\si'}$ 
-- only the subfactor $Q_{k',0}^{\si'}$ of $Q_{k',\ell}^{\si'}$
(see the display in Theorem \ref{quant}) 
is replaced by $\io(\si')S_{k'}$ in $\ol{Q}_{k',\ell}^{\si'}$. 
It is mentioned in the proof of Proposition \ref{ind} that only the term 
$(\si')^{a_1 \ldots a_{k'}} \na_{a_1} \ldots \na_{a_{k'}} \De^\ell$  of 
$Q_{k',\ell}^{\si'}$ contributes to the generic leading term
$(\si')^{a_1 \ldots a_{k'}} \na_{a_1} \ldots \na_{a_{k'}} \De^{\ell+1}$  
of $\wt{Q}_{{k'},\ell}^{\si'}$, see Proposition \ref{ind} for the notation.
However $\io(\si')S_{k'}$ has the leading term
$(\si')^{a_1 \ldots a_{k'}} \na_{a_1} \ldots \na_{a_{k'}}$ 
for $\de' \in \Si_{k',0}$ as well as 
$Q_{k',0}^{\si'}$ for $\de' \not\in \Si_{k',0}$. It follows
that $\ol{Q}_{k',\ell}^{\si'}$ has 
$(\si')^{a_1 \ldots a_{k'}} \na_{a_1} \ldots \na_{a_{k'}} \De^\ell$ 
as the leading term
for all $\de' \in \Si_{k',\ell}' \cup \Si_{k',\ell}''$. Using
Lemma \ref{disjoint}(ii) the theorem follows.
\end{proof}

\noindent
\begin{remark*}
As expected, the quantization used in the previous Proposition is not 
unique. If, for example,  
$\de' \in \Si_{k',\ell}' \cup \Si_{k',\ell}''$ but
$\de' \not\in \Si_{k',\ell_0}' \cup \Si_{k',\ell_0}''$ for some 
$\ell_0 < \ell$,
one can use also the operator $Q_{k',\ell_0}^{\si'} L_{\ell-\ell_0}$
which is invariant on $\cE[-n/2+\ell-\ell_0]$. (A similar idea can be used
for $\de' \in \Si_{k',0}$.)
\end{remark*}

\section{Comparision with related results}

There are several related results concerning conformal quantization 
either in the flat case \cite{DLOex} or in lower order curved cases
\cite{DuOvHam,Dj3,MaRaNc}. On the other hand, conformal quantization 
is a special case of bilinear operators constructed in \cite{KrTh}.
We discuss \cite{KrTh} and \cite{DLOex} in more details here. 

In the groundbreaking Kroeske's thesis \cite{KrTh}, a general construction
of invariant bilinear operators (or ``invariant pairing'') for (curved) 
parabolic geometries is developed. However, we require that
$Q_{k',\ell}^{\si'}: \cE[w] \to \cE[w+\de']$ is defined for every 
$w \in \bbR$ whereas the construction in \cite{KrTh} generally yields
\idx{couples} of critical weights $(\de',w)$.
Considering the conformal case, it is probably possible to obtain 
the quantization on densities from the detailed exposition in 
\cite[Section 5]{KrTh} with some set of critical weights.
Our construction of the  bilinear operator $Q$ is much simpler as it is 
designed to the special case needed here.

In \cite{DLOex}, the study of conformal quantization was initiated.
The set of critical weights (as a subset of ``resonant'' weights)  
agrees with our result up
to the order 2. In the order 3, also the critical case
$\cE^{(a_1a_2a_3)}[\de]$
where $\de= -\frac{2}{3}(n+2)$. (Note we have used
$\cE[-nw] = \cF_{w}$ (cf.\ the introduction) to pass to our notation.
In fact, the value
$\frac{2(n+2)}{3n}$ is obtained by the choice $(k,l,s,t) = (3,0,1,0)$ 
in \cite[(3.7)]{DLOex}, see also \cite[Theorem 3.5,3.6]{DLOex}.)
$\cE^{(a_1a_2a_3)}[\de]$ has two irreducible components, in particular
$\cE^{(a_1a_2a_3)_0}[\de]$ and $\cE^a[\de+2]$. However
$\de \not\in \Si_{3,0}$ and $\de+2 \not\in \Si_{1,1}$ for 
$\de= -\frac{2}{3}(n+2)$ and generic $n$. 
Note there is no nontrivial natural linear flat conformal operator on 
$\cE^a[\de+2]$ or $\cE^{(a_1a_2a_3)_0}[\de]$ in generic dimensions.

\vspace{1ex}

It seems plausible the critical set $\Si_{k',\ell}$  is minimal for conformal 
quantization with the corresponding leading term. Although no 
non-existence results for higher orders are known (up to our knowledge), 
we conjecture that
if $(\si')^{a_1 \ldots a_{k'}} \in \cE^{(a_1 \ldots a_{k'})_0}[\de']$,
$\de' \in \Si_{k',\ell}$ then there is no conformal quantization 
$\cE[w] \to \cE[w+\de'-2\ell]$  with the leading 
term $\si^{a_1 \ldots a_{k'}} \na_{a_1} \ldots \na_{a_{k'}} \De^\ell$ 
for generic $w \in \bbR$. The minimality is closely related
to Proposition \ref{critop} and Theorem \ref{critquant}. In particular, 
we expect a version of Proposition \ref{critop} to be a necessary condition 
for nonexistence.

\section{Examples}

The tractor formulae are easily rewritten to the usual formulae
in the Levi--Civita covariant derivative and its curvature.
We shall demonstrate this on the quantization of the order three
(which in fact known \cite{Dj3}). There are two irreducible
leading terms:

\begin{example}
We shall start with the case $(\si')^{abc}\na_a\na_b\na_c$ where
$(\si')^{abc} \in \cE^{(abc)_0}[\de']$. We shall avoid the general result
from \cite{KrTh} as one can directly verify the differential operators
\begin{align*}
(\si')^{abc} \mapsto M^{ABC}_{\ a\,b\,c} & (\si')^{abc} :=
(n+\de'+2)(n+\de'+3)(n+\de'+4) Z^A_{\,a} Z^B_{\,b} Z^C_{\,c} (\si')^{abc} \\
&-3 (n+\de'+2)(n+\de'+3) X^{(A} Z^B_{\,b} Z^{C)}_{\,c} \na_p(\si')^{pbc} \\
&+3 (n+\de'+3) X^{(A} X^B Z^{C)}_{\,c} 
\bigl( \na_p \na_q + (n+\de'+4) P_{pq} \bigr) (\si')^{pqc} \\
&-X^A X^B X^C \bigl[ \na_p \bigl( \na_q \na_r + (n+\de'+4) P_{qr} \bigr) 
+ 2(n+\de'+3) P_{pq} \na_r \bigr] (\si')^{pqr}
\end{align*}
and 
\begin{align*}
f \mapsto \wt{D}_{ABC} f :=  
& w(w-1)(w-2) Y_A Y_B Y_C f \\
&+ 3 (w-1)(w-2) Y_{(A} Y_B Z_C^{\,c} \na_c f \\
&+ 3 (w-2) Y_{(A} Z_B^{(b} Z_{C)}^{\,c)_0} \bigl( \na_b\na_c + wP_{bc} \bigr) \\
&+ Z_{(A}^{(a} Z_B^b Z_{C)}^{\,c)_0} \bigl[
\na_a (\na_b\na_c + wP_{bc}) + 2(w-1) P_{bc} \na_a \bigr] f
\end{align*}
where $f \in \cE[w]$. One easily verifies directly they are conformally 
invariant. (Note the $M$ is a special case of the ``middle operator'' from
\cite[2.1.4]{SiTh}.). 

The target space of the operator 
$M^{ABC}_{\ a\,b\,c}$ is a subbundle of $\cE^{(ABC)}[\de'+3]$ and  
target space of $\wt{D}_{ABC}$ is a quotient of 
$\cE_{(ABC)}[w-3]$. These two target spaces are dual to each other via 
the tractor metric $h$. Thus the contraction
\begin{align*}
f \mapsto \bigl( M^{ABC}_{\ a\,b\,c}  (\si')^{abc} \bigr) \wt{D}_{ABC} f = 
&(n+\de'+2)(n+\de'+3)(n+\de'+4) \\
&(\si')^{abc} [\na_a (\na_b\na_c + wP_{bc}) + 2(w-1) P_{bc} \na_a]f + lot
\end{align*}
where $lot$ means ``lower order terms'', is conformally invariant. 
Note the we see directly from \nn{trm} which terms of
$M^{ABC}_{\ a\,b\,c} (\si')^{abc}$ and $\wt{D}_{ABC} f$ contract nontrivially
with each other.
\end{example}

\begin{example}
Another possible leading term in the third order is
$(\si')^b \na_b \De$, $(\si')^b \in \cE^b[\de']$. Also this case can be solved 
without the general formula in Theorem \ref{quant}. Similarly as in 
the previous example, we shall start with the conformally invariant operator
$(\si')^b \mapsto M^B_{\;b}(\si')^b := 
(n+\de') Z^B_{\;b}(\si')^b - X^A\na_p (\si')^p$.
Next we apply the operator $D_A$ and symmetrize over the 
tractor indices. Overall we obtain
\begin{align*}
(\si')^b \mapsto & M^B_{\;b}(\si')^b \mapsto
D^{(A} M^{B)}_{\;b}(\si')^b = \de'(n+\de')(n+2\de') Y^{(A} Z^{B)}_{\,b} 
(\si')^b \\
&+(n+2\de') Z^{(A}_{\ a} Z^{B)}_{\;b} \bigl[ 
(n+\de') \na^{(a}(\si')^{b)_0} 
+\frac{1}{n} \de' \bg^{ab} \na_p (\si')^p \bigr] \\
&-\de'(n+2\de') X^{(A} Y^{B)} \na_p (\si')^p \\
&-X^{(A} Z^{B)}_{\;b} \bigl[ 
(n+2\de'-2) \bigl( \na^b \na_p + (n+\de') P^b{}_p \bigr) (\si')^p
+(n+\de') \bigl( \De + (\de'+1)J \bigr) (\si')^b \bigr] \\
&+X^{(A} X^{B)} \bigl[ \bigl( \De+ \de' J \bigr) \na_p 
+ (n+\de') \bigl( (\na_p J) + 2 P_{pq} \na^q \bigr) \bigr] (\si')^p
\end{align*}
after some computation using \nn{Dform}. The target space of 
$D^{(A} M^{B)}_{\;b}$ is a subbundle of $\cE^{(AB)}[\de']$. Hence
we need an operator which
takes $f \in \cE[w]$ in to the dual of this target space. Naively, we can 
use $D_AD_Bf$ but this would kill the leading term $(\si')^p \na_p\De$ for
$w = -\frac{n}{2}+2$. In fact, the dual (up to a conformal weight) 
to $D^{(A} M^{B)}_{\;b}(\si')^b$ 
is a quotient of $D_AD_B f$ which we denote by $\wt{T}_{AB}f$. 
After some computation, we obtain that
\begin{align*}
f \mapsto &\wt{T}_{AB}f := w(w-1)(n+2w-2) Y_A Y_B f + \\
&+2(w-1)(n+2w-2) Y_{(A} Z_{B)}^{\;b} \na_b f \\
&+Z_{(A}^{\ a} Z_{B)}^{\;b} \bigl[
(n+2w-2) \bigl( \na_{(a} \na_{b)_0} +w P_{(ab)_0} \bigr) 
+\frac{2}{n}(w-1) \bg_{ab} \bigl( \De + wJ \bigr) \bigr] f\\
&-2(w-1) X_{(A} Y_{B)} \bigl( \De + wJ \bigr) f \\
&-2 X_{(A} Z_{B)}^{\;b} \bigl[ \na_b \bigl( \De + wJ \bigr)
+(n+2w-2)P_b{}^p\na_p \bigr] f 
\end{align*}
is conformally invariant. Summarizing, we obtain the conformally invariant 
quantization on $\cE[w]$ by
\begin{align*}
f \mapsto &\bigl( D^{(A} M^{B)}_{\;b}(\si')^b \bigr) T_{AB} f = \\
&=-\de'(n+\de')(n+2\de') (\si')^b
\bigl[\na_b \bigl( \De + wJ \bigr) +(n+2w-2)P_b{}^p\na_p \bigr] f + lot
\end{align*}
where ``$lot$'' denotes lower order terms.
\end{example}

\end{document}